\documentclass[12pt]{amsart}
\usepackage[latin1]{inputenc}
\usepackage{color}
\usepackage{bbm}
\usepackage{amsmath,amssymb}
\usepackage{enumerate}
\usepackage{mathrsfs}
\usepackage{verbatim}
\usepackage{graphicx}

\newtheorem{thm}{Theorem}[section]

\newtheorem{prop}[thm]{Proposition}
\theoremstyle{definition}

\numberwithin{equation}{section}

\frenchspacing
\textwidth=16cm
\textheight=23cm
\parindent=16pt
\makeatletter
\oddsidemargin=0cm\evensidemargin=0cm
\topmargin=-0.5cm

\newcommand{\rd}{\,\mathrm{d}}

\newcommand{\bsx}{\boldsymbol{x}}

\newcommand{\bsz}{\boldsymbol{z}}

\newcommand{\bst}{\boldsymbol{t}}

\newcommand{\bszero}{\boldsymbol{0}}
\newcommand{\bsone}{\boldsymbol{1}}

\newcommand{\NN}{\mathbb{N}}

\newcommand{\cH}{\mathcal{H}}
\newcommand{\ZZ}{\mathbb{Z}}

\newcommand{\disp}{\mathrm{disp}}

\newcommand{\cP}{\mathcal{P}}

\newcommand{\vecs}{\boldsymbol{s}}
\newcommand{\vecp}{\boldsymbol{p}}

\allowdisplaybreaks

\begin{document}
\title{Dispersion of digital $(0,m,2)$-nets}
\author[R. Kritzinger]{Ralph Kritzinger}
\address{Institute of Financial Mathematics and Applied Number Theory, Johannes Kepler University Linz, Austria, 4040 Linz, Altenberger Strasse 69}
\email{ralph.kritzinger@jku.at}

\begin{abstract}
   We study the dispersion of digital $(0,m,2)$-nets; i.e. the size of the largest axes-parallel box within such point sets. Digital nets are an important class of low-discrepancy point sets. We prove tight lower and upper bounds for certain subclasses of digital nets where the generating matrices are of triangular form and compute the dispersion of special nets such as the Hammersley point set exactly.
\end{abstract}

\maketitle 
\section{General facts on dispersion of point sets in the plane}
Let $N\in\NN$. Given an $N$-element point set in $[0,1]^2$, we define its dispersion to be the volume of the largest empty axes-parallel box amidst the point set.
More precisely, define $\mathcal{B}:=\{[x_1,y_1)\times [x_2,y_2)\mid 0\leq x_1\leq y_1\leq 1, 0\leq x_2\leq y_2\leq 1\}$. Then the dispersion $\disp(\cP)$ of the point set $\cP$ is given by $\disp(\cP):=\sup_{B\in\mathcal{B}, B\cap\cP=\emptyset}\lambda(B)$, where $\lambda(B)$ denotes the area of the box $B$.
 For the dispersion of any $N$-element point set $\cP_N$ in $[0,1]^2$ it is known that
 $$ \mathrm{disp}(\cP_N)\geq \max\left\{\frac{1}{N+1},\frac{5}{4(N+5)}\right\} $$
(see~\cite{Dumi}).
The best known construction with respect to dispersion is the Fibonacci lattice $\mathcal{F}_m$ for $m\geq 6$, for which we have
$$ \mathrm{disp}(\mathcal{F}_m)=\frac{2(F_m-1)}{F_m^2}, $$
where $F_m=\#\mathcal{F}_m$ is the $m$-th Fibonacci number (see~\cite{Bren}). So no better dispersion than asymptotically $2/N$ is known for point sets in the plane, not
even existence results. Therefore we have $\liminf_{N\to\infty} N\disp(N,2)\in [\frac54,2]$, where $\disp(N,2):=\inf_{\cP\subset [0,1]^2:\, \#\cP=N}\disp(\cP)$. It is an interesting open problem to find the exact value of this limes inferior.\\
The study of the behaviour of the dispersion of point sets in high dimensions has led to many recent papers. It is known that $\disp(\cP)\geq \frac{c_d}{N}$ for all $N$-element point sets in the $d$-dimensional unit cube $[0,1]^d$, where $c_d>0$ is independent of $N$ and tends to infinity at least logarithmically with $d$ (see~\cite[Theorem 1]{Aist}). On the other hand, we have the upper bound $\disp(\cP)\leq \frac{2^{7d}}{N}$, which is attained for certain $(t,m,d)$-nets (see~\cite[Section 4]{Aist}). We refer to~\cite{Ullr} for another upper bound on the dispersion  obtained by probabilistic arguments and for a short survey on further known results and applications of dispersion.\\
This paper is dedicated to a thorough study of the dispersion of digital $(0,m,2)$-nets, an important class of low-discrepancy point sets in the plane. It is known that the dispersion of $(0,m,2)$-nets is asymptotically $c/N$  for some positive constant $c$ and where $N=2^m$ is the number of elements of the net (see~\cite[Prop. 3.1]{Rote}). 
We will investigate how large the constant $c$ must necessarily be and how large it can be at most for certain instances of such point sets. \\
The structure of the paper is as follows: In Section 2 we give the definition of $(0,m,2)$-nets in base $b$ and prove a general upper bound on their dispersion. In the following sections we concentrate on digital $(0,m,2)$-nets which are generated by matrices as explained in Section 3. The focus will be on special digital nets which are generated by matrices with triangular form which we call NUT and NLT nets. For these classes of digital nets we will prove lower and upper bounds on their dispersion in Section 3. In Section 4, we will prove exact formulas or at least very good lower bounds for the dispersion of important instances of NUT and NLT nets. In particular, we will observe that our bounds on NUT and NLT nets as proven in Section 3 are basically sharp. In the last section, we compare our results on dispersion with known results on discrepancy of digital nets.

\section{A first upper bound for arbitrary $(0,m,2)$-nets}

Let $m\in\NN$. A $(0,m,2)$-net in base $b$ is a point set in $[0,1]^2$ with $N=b^m$ elements such that every $b$-adic interval of the form
  $$ \left[\frac{a_1}{b^{j_1}},\frac{a_1+1}{b^{j_1}}\right)\times  \left[\frac{a_2}{b^{j_2}},\frac{a_2+1}{b^{j_2}}\right),$$
where $j_1,j_2\in \NN_0$ and $a_i \in\{0,1,\dots, b^{j_i}-1\}$ for $i\in\{1,2\}$ with area $b^{-m}$ (i.e. where $j_1+j_2=m$) contains exactly
one point of the point set.
 We immediately get the following upper bound on the dispersion of $(0,m,2)$-nets in base $b$, using an idea of Rote and Tichy as presented in~\cite[Prop. 3.1]{Rote} or Larcher~\cite[Section 4]{Aist}.
This bound shows that the dispersion of $(0,m,2)$-nets is optimal with respect to the order of magnitude of $N$.

\begin{prop}
  Let $\cP$ be a $(0,m,2)$-net in base $b$. Then we have $\mathrm{disp}(\cP)\leq \frac{4b^2}{b^m}.$
\end{prop}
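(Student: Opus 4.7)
The plan is to argue by contradiction: assume an empty axes-parallel box $B=[x_1,y_1)\times[x_2,y_2)\subset[0,1]^2$ of area $w_1w_2>4b^2/b^m$ (where $w_i:=y_i-x_i$), and produce inside $B$ a $b$-adic box of area $b^{-m}$. By the defining property of a $(0,m,2)$-net, that $b$-adic box contains exactly one point of $\cP$, which would then lie in $B$, contradicting emptiness.

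To produce the $b$-adic box I will use the elementary observation that an interval $[x,y)\subset[0,1)$ of length at least $2/b^j$ always contains some $b$-adic interval of the form $[a/b^j,(a+1)/b^j)$: indeed, $[xb^j,yb^j-1]$ has length $\geq 1$, hence contains an integer. So it suffices to exhibit nonnegative integers $j_1,j_2$ with $j_1+j_2=m$ such that $w_i\geq 2/b^{j_i}$ for $i=1,2$.

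The natural choice is to take $j_1$ to be the smallest positive integer with $w_1\geq 2/b^{j_1}$, and then set $j_2:=m-j_1$. A short computation shows that this $j_1$ actually lies in $\{1,\ldots,m-1\}$: the bound $w_1\leq 1<2$ forces $j_1\geq 1$, while $w_1w_2>4b^2/b^m$ together with $w_2\leq 1$ yields $w_1>4b^2/b^m$, which prevents $j_1=m$ (pushing $j_1$ to $m$ would force $w_2>2b>1$). Minimality of $j_1$ gives $w_1<2/b^{j_1-1}=2b/b^{j_1}$, so the assumed area lower bound implies
\[
w_2>\frac{4b^2/b^m}{w_1}>\frac{4b^2/b^m}{2b/b^{j_1}}=\frac{2b}{b^{j_2}}\geq \frac{2}{b^{j_2}},
\]
as required.

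The only real obstacle is the bookkeeping in this choice of $j_1$: one must check that the discrete optimization over $j_1$ is feasible in the range $\{1,\ldots,m-1\}$, and that the derivation of the matching inequality for $w_2$ uses the full strength of the hypothesis. The factor $4b^2$ (rather than, say, $4$) appearing in the proposition is precisely the slack needed to absorb the two sources of discretization loss: the factor $2$ lost in each coordinate when going from ``an interval of length $w$'' to ``an interval of length $w/2$ rounded to a $b$-adic grid'', together with the factor $b$ lost because $j_1$ may overshoot the optimum by up to one unit.
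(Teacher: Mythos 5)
Your proof is correct and follows essentially the same route as the paper: place a $b$-adic interval of suitable resolution inside each side of a putative empty box so that $j_1+j_2=m$, then invoke the net property. In fact your bookkeeping (choosing $j_1$ minimal and letting the area hypothesis force $w_2>2b/b^{j_2}$) only uses $w_1w_2>4b/b^m$, so it actually yields the slightly stronger bound $\mathrm{disp}(\cP)\leq 4b/b^m$, whereas the paper loses a factor $2b$ in each coordinate independently.
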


\begin{proof}
   Any interval $\mathcal{I}\subset [0,1]$ of length $|\mathcal{I}|$ contains a $b$-adic interval of length larger than $|\mathcal{I}|/(2b)$. Therefore, any box $B\subseteq [0,1]^2$
	 contains a dyadic box of size larger than $\lambda(B)/(2b)^2$. Thus, if $\lambda(B)/(2b)^2\geq b^{-m}$, then $B$ contains a dyadic box of size $b^{-m}$
	and therefore a point of $\cP$. This yields $$\mathrm{disp}(\cP)\leq \frac{4b^2}{b^m}.$$
\end{proof}

We can expect that there exist much better bounds for special $(0,m,2)$-nets. The aim of this article is to find such bounds.

\section{Improved dispersion bounds for certain digital $(0,m,2)$-nets}

A digital net in prime base $b\geq 2$ is a point set $\{\boldsymbol{x}_0,\ldots, \boldsymbol{x}_{b^m-1}\}$ in $[0,1)^2$,
which is generated by two matrices of size $m\times m$ with entries in $\ZZ_b$, the field with $b$ elements. The construction is as follows. Let $m\geq 1$ be an integer.
\begin{itemize}
\item Choose a bijection $\phi:\{0,1,\dots, b-1\} \rightarrow \mathbb{Z}_b$.
\item Choose $m \times m$ matrices $C_1$ and $C_2$ over $\ZZ_b$.
\item For some $n\in\{0,1,\dots,b^m-1\}$ let $n=e_1+b e_2  +\cdots +b^{m-1}e_{m}$ with $e_i\in\{0,1,\dots,b-1\}$ for all $i\in\{1,\dots,m\}$ be the $b$-adic expansion of $n$. Map $n$ to the vector $\vec{n}=(\phi(e_1),\ldots , \phi(e_{m}))^{\top}$.
\item Compute $C_j \vec{n}=:(y_{n,1}^{(j)},\ldots ,y_{n,m}^{(j)})^{\top}$ for $j=1,2$.
\item Compute $x_n^{(j)}=\frac{\phi^{-1}(y_{n,1}^{(j)})}{b}+\cdots +\frac{\phi^{-1}(y_{n,m}^{(j)})}{b^m}$ for $j=1,2$.  
\item Set $\boldsymbol{x}_{n}=(x_n^{(1)},x_{n}^{(2)})$.
\item Repeat steps 3 to 6 for all $n\in \{0,1,\dots,b^m-1\}$ and set $\cP:=\{\bsx_0,\dots,\bsx_{b^m-1}\}$. We call $\cP$ a digital net generated by $C_1$ and $C_2$.
\end{itemize}
A digital net in base $b$ is a $(0,m,2)$-net in base $b$ if and only if the following condition holds: For every choice of integers $d_1,d_2\in \NN_0$ with $d_1+d_2=m$ the first $d_1$ lines of $C_1$ and the first $d_2$ lines of $C_2$ are linearly independent over $\ZZ_b$. We refer to~\cite{Dick} for a comprehensive introduction to digital nets. \\
First we show a trivial lower bound on the dispersion of digital $(0,m,2)$-nets in base $b$, which demonstrates that we cannot get (asymptotic) dispersion $c/N$ with a constant $c<2$ for digital $(0,m,2)$-nets.
\begin{prop} \label{trivlower}
  For any digital $(0,m,2)$-net in base $b$ we have  $\mathrm{disp}(\cP)\geq \frac{2}{b^m}\left(1-\frac{1}{b^m}\right)$.
\end{prop}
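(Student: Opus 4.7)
The plan is to exploit the grid structure that the digital construction imposes on $\cP$. Specializing the $(0,m,2)$-net condition to $(d_1,d_2)=(m,0)$ and $(0,m)$ forces both generating matrices $C_1,C_2$ to be invertible over $\ZZ_b$; hence $\vec{n}\mapsto C_j\vec{n}$ is a bijection on $\ZZ_b^m$ for $j=1,2$. A direct inspection of the construction then shows that the multisets of first and second coordinates are each equal to the $b$-adic grid $\{0,1/N,2/N,\ldots,(N-1)/N\}$, with every value attained exactly once. Consequently $\cP$ can be written as $\{(k/N,\sigma(k)/N)\,:\,k=0,\ldots,N-1\}$ for some permutation $\sigma$ of $\{0,\ldots,N-1\}$, where $N=b^m$.

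With the grid structure in place, the key observation I would use is this: for any index $k\in\{1,\ldots,N-1\}$, the only element of $\{0,1/N,\ldots,(N-1)/N\}$ strictly between $(k-1)/N$ and $(k+1)/N$ is $k/N$ itself. Hence the only point of $\cP$ whose first coordinate lies in the open interval $((k-1)/N,(k+1)/N)$ is $p_k:=(k/N,\sigma(k)/N)$, and for every sufficiently small $\varepsilon>0$ this point is excluded from the half-open box
\[
B_k(\varepsilon)\;:=\;\left[\frac{k-1}{N}+\varepsilon,\;\frac{k+1}{N}\right)\times\left[\frac{\sigma(k)}{N}+\varepsilon,\;1\right),
\]
because its $y$-coordinate $\sigma(k)/N$ lies strictly below the lower edge of the box. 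Thus $B_k(\varepsilon)$ is empty; letting $\varepsilon\to 0^+$ yields $\disp(\cP)\geq (2/N)(1-\sigma(k)/N)$.

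The final step is to pick $k$ so that $\sigma(k)$ is as small as possible, which is a one-line pigeonhole. If $\sigma(0)\neq 0$, take $k:=\sigma^{-1}(0)\in\{1,\ldots,N-1\}$ to obtain $\disp(\cP)\geq 2/N$. Otherwise $\sigma(0)=0$, which forces $\sigma^{-1}(1)\in\{1,\ldots,N-1\}$, and taking $k:=\sigma^{-1}(1)$ gives $\disp(\cP)\geq (2/N)(1-1/N)$. Either way the bound claimed in the proposition is recovered. The only genuinely non-routine ingredient is the grid observation in the first paragraph (namely, that both coordinate projections of a digital $(0,m,2)$-net are exact $N$-grids, not merely $b$-adic equidistributions); everything that follows is elementary.
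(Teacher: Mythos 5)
Your proof is correct and is essentially the paper's argument in mirror image: the paper takes the point of the grid $\{(k/N,\sigma(k)/N)\}$ with maximal second coordinate $1-1/N$ and places the empty $\tfrac{2}{N}\times(1-\tfrac1N)$ box below it, while you take the point of smallest second coordinate with nonzero index and place the box above it. The grid-structure observation and the width-$2/N$ strip around a single column are exactly the ingredients the paper uses, so there is nothing substantively new here.
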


\begin{proof}
    Let $\cP=\{(x,y(x)):x=0,\frac{1}{b^m},\dots,\frac{b^m-1}{b^m}\}$ be a digital $(0,m,2)$-net in base $b$. Then there exists an $x\in \{\frac{1}{b^m},\dots,\frac{b^m-1}{b^m}\}$ such that
		$y(x)=1-\frac{1}{b^m}$. Hence the box $$ \left(x-\frac{1}{b^m},x+\frac{1}{b^m}\right)\times \left(0,1-\frac{1}{b^m}\right) $$
		 of size $\frac{2}{b^m}\left(1-\frac{1}{b^m}\right)$ is empty and the result follows.
\end{proof}

For $C_1$ we will always choose the following matrix (in the case $b=2$ this choice means no loss of generality)

\begin{equation*} \label{matrixa} C_1=J_m:=
\begin{pmatrix}
0 & 0 & 0 & \cdots & 0 & 0 & 1 \\
0 & 0 & 0 & \cdots & 0 & 1 & 0 \\
0 & 0 & 0 & \cdots & 1 & 0 & 0 \\
\vdots & \vdots & \vdots & \ddots & \vdots & \vdots & \vdots & \\
0 & 0 & 1 & \cdots & 0 & 0 & 0 \\
0 & 1 & 0 & \cdots & 0 & 0 & 0 \\
1 & 0 & 0 & \cdots & 0 & 0 & 0 \\
\end{pmatrix}.
\end{equation*}

We speak of a NUT net, if $C_2$ is a nonsingular upper (right) triangle matrix; i.e.
\begin{equation} \label{NUTmatrix}
 C_2=
\begin{pmatrix}
c_{1,1} & c_{1,2} & c_{1,3} & \cdots & c_{1,m-2} & c_{1,m-1} & c_{1,m} \\
0 & c_{2,2} & c_{2,3} & \cdots &  c_{2,m-2} & c_{2,m-1} & c_{2,m} \\
0 & 0 & c_{3,3} & \cdots & c_{3,m-2} & c_{3,m-1} & c_{3,m} \\
\vdots & \vdots & \vdots & \ddots & \vdots & \vdots & \vdots & \\
0 & 0 & 0 & \cdots &  c_{m-2,m-2} & c_{m-2,m-1} & c_{m-2,m} \\
0 & 0 & 0 & \cdots &  0 & c_{m-1,m-1} & c_{m-1,m} \\
0 & 0 & 0 & \cdots &  0 & 0 & c_{m,m} \\
\end{pmatrix}.
\end{equation}
and of a NLT net, if $C_2$ is a nonsingular lower (left) triangle matrix; i.e.
\begin{equation*}
 C_2=
\begin{pmatrix}
c_{1,1} & 0 & 0 & \cdots & 0 & 0 & 0 \\
c_{2,1} & c_{2,2} & 0 & \cdots &  0 & 0 & 0 \\
c_{3,1} & c_{3,2} & c_{3,3} & \cdots & 0 & 0& 0 \\
\vdots & \vdots & \vdots & \ddots & \vdots & \vdots & \vdots & \\
c_{m-2,1} & c_{m-2,2} & c_{m-2,1} & \cdots &  c_{m-2,m-2} &0 & 0 \\
c_{m-1,1} & c_{m-1,2} & c_{m-1,1} & \cdots &  c_{m-1,m-2} & c_{m-1,m-1} & 0\\
c_{m,1} & c_{m,2} & c_{m,1} & \cdots &  c_{m,m-2} & c_{m,m-1} & c_{m,m} \\
\end{pmatrix}.
\end{equation*}
Clearly, the diagonal entries of such matrices must be non-zero.
In the following, if $n=e_1+be_2+\dots+b^{m-1}e_m$ with $e_i\in\{0,1,\dots,b-1\}$ for all $i\in \{1,\dots,m\}$ is the $b$-adic expansion of $n$, we shall write $n=[e_me_{m-1}
\dots e_1]_b$.

\begin{thm} \label{NUT}
  Let $\cP$ be a digital $(0,m,2)$ NUT or NLT net in base $b$. Then $$\mathrm{disp}(\cP)\leq \frac{2b}{b^m}\left(1-\frac{b}{2b^m}\right).$$
\end{thm}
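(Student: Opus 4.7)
The approach is to exploit the structural regularity of NUT (and, by a symmetric argument, NLT) digital $(0, m, 2)$-nets to improve on the generic Rote--Tichy bound $4b^2/b^m$. I focus on the NUT case. First, I would establish two key structural facts. Since $C_1 = J_m$ reverses digits, the $x$-coordinates are $\{n/b^m : 0 \leq n < b^m\}$, forming a regular grid of spacing $1/b^m$. The upper-triangular form of $C_2$ ensures that within each aligned $b$-strip $I_j := [jb/b^m, (j+1)b/b^m)$, the $b$ points have $y$-coordinates $\{r_j + i/b : i = 0, \ldots, b-1\}$ for some shift $r_j \in [0, 1/b)$ depending only on $j$; a short computation further shows that the $b^{m-1}$ shifts are pairwise distinct multiples of $1/b^m$ in $[0,1/b)$.

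Consider an empty box $B = [a_1, a_2) \times [c_1, c_2)$ with $A = a_2 - a_1$ and $C = c_2 - c_1$. Since $\mathrm{disp}(\cP)$ is defined as a supremum, I may assume (WLOG) that the corners of $B$ lie in $\{i/b^m\} \cup \{0, 1\}$, so that $A, C$ are multiples of $1/b^m$. Let $L$ denote the number of strips $I_j$ fully contained in $[a_1, a_2)$. I would then distinguish two cases. If $L = 0$, an alignment argument shows $A \leq (2b - 2)/b^m$: any larger multiple of $1/b^m$ would force at least one aligned strip to be contained in $[a_1, a_2)$. Combined with the trivial bound $C \leq 1$ (and $A \leq 1$ as a backup for degenerate parameters), this yields $AC \leq (2b - 2)/b^m \leq 2b/b^m - b^2/b^{2m}$ by elementary algebra. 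If $L \geq 1$, the $bL$ points from the full strips must have their $y$-coordinates outside $[c_1, c_2)$; since the $L$ distinct shifts have pairwise separation at least $1/b^m$, the maximum gap in the structured $bL$-point set $\{r_{j_\ell} + i/b\}$ in $[0,1]$ is bounded above by roughly $1/b - (L-1)/b^m$. Coupling this bound on $C$ with $A \leq (bL + 2(b-1))/b^m$, and incorporating additional constraints from partial-strip $y$-values as needed, the product $AC$ can be bounded by $2b/b^m - b^2/b^{2m}$ after optimization in $L$.

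The main obstacle is in the case $L \geq 1$: the naive bound $C \leq 1/b - L/b^m$ from shift-distinctness alone turns out to be insufficient for intermediate values of $L$ (on the order of $b^{m-1}/2$), where the induced product bound can exceed the theorem's bound. Sharpening this requires invoking not just the distinctness of shifts but also the specific structure governed by the submatrix $\tilde{C}_2 = (c_{i,j})_{i,j=2}^m$, together with a careful accounting of the partial-strip contributions on either side of $[a_1, a_2)$ (which add up to $2(b-1)$ further $y$-values that further constrain the max gap). The algebraic verification that the refined product bound holds uniformly in $L$ is expected to be the most technically involved part of the proof.
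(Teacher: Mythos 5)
Your structural observations are correct, but the proof has a genuine gap at exactly the point you flag, and that point is not a technicality --- it is the entire content of the theorem. In the case $L\geq 1$ the only quantitative information you extract about the shifts $r_j$ is that they are pairwise distinct multiples of $1/b^m$ in $[0,1/b)$. That gives $C\leq 1/b-(L-1)/b^m$ and $A\leq (bL+2(b-1))/b^m$, and this product is maximized near $L\approx b^{m-1}/2$ at roughly $1/(4b)$ --- a constant, not $O(b^{-m})$, so it misses the claimed bound by a factor of order $b^{m-2}$. Distinctness of the shifts cannot be rescued by ``additional constraints from partial strips'' (those contribute only $O(b)$ further points); what is actually needed is the full multi-scale equidistribution of the $r_j$ over blocks of consecutive $j$, i.e.\ in effect a recursion on the $(m-1)\times(m-1)$ lower-right submatrix of $C_2$, whose associated shifts again form a scaled NUT net. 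None of that is carried out, so for intermediate $L$ your argument proves nothing beyond a trivial bound.

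The paper avoids this difficulty entirely by first passing from the NUT net to an equivalent NLT net: right-multiplying both generating matrices by $C_2^{-1}J_m$ leaves the point set unchanged, $J_mC_2^{-1}J_m$ is a NLT matrix, and swapping the two coordinates preserves dispersion. For a NLT net the first $k$ digits of the second coordinate $\mu_m(n)$ depend bijectively on $n\bmod b^k$; since any $b^k$ consecutive integers have pairwise distinct residues modulo $b^k$, their images under $\mu_m$ meet every interval $[s/b^k,(s+1)/b^k)$ exactly once, so the largest gap in these values together with the endpoints $0,1$ is at most $2b^{-k}-b^{-m}$. An empty box with $b^m|I_x|\in\{b^k+1,\dots,b^{k+1}\}$ then has area at most $\frac{b^{k+1}}{b^m}\left(\frac{2}{b^k}-\frac{1}{b^m}\right)\leq\frac{2b}{b^m}\left(1-\frac{b}{2b^m}\right)$, and boxes with $b^m|I_x|\leq b$ are trivially small. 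If you want to keep your strip decomposition you must prove the analogous hierarchical statement for the shifts; the cleaner route is the NLT reformulation.
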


\begin{proof}
  We first show the result for NLT nets. Let $\cP$ be such a point set, which can be written as
	$$ \cP=\left\{\left(\frac{n}{b^m},\mu_m(n)\right):n=0,1,\dots,b^m-1\right\}, $$
	where for $n\in\{0,1,\dots,b^m-1\}$ with $n=[e_m\dots e_1]_b$ we define 
	$$\mu_m(n)=\frac{c_{1,1}\odot e_1}{b}+\frac{c_{2,1}\odot e_1\oplus c_{2,2}\odot e_2}{b^2}+\dots+\frac{c_{m,1}\odot e_1\oplus \dots \oplus c_{m,2}\odot e_m}{b^m}.$$
	The operators $\oplus$ and $\odot$ denote addition and multiplication modulo $b$, respectively.
  For $k\in\{1,\dots,m-1\}$ consider a subset $A_k$ of $\{1,\dots,b^m-1\}$ of $b^k$ consecutive numbers. Let $n\neq n'$ be distinct elements of $A_k$ and $n=[e_m\dots e_{k+1}e_k\dots e_1]_b$ and $n'=[e_m'\dots e_{k+1}'e_k'\dots e_1']_b$ their $b$-adic expansions. Then clearly the strings $(e_k \dots e_1)$ and $(e_k' \dots e_1')$ are distinct. Since the first $k$ terms of $\mu_m(n)$ depend only on the digits $e_1,\dots,e_k$ of $n$ and the diagonal entries $c_{i,i}$ are non-zero for all $i\in\{1,\dots,m\}$, we conclude that 
	  $$ \{\mu_m(n)\mid n \in A_k\}=\left\{\frac{s}{b^k}+\zeta_s: s=0,1,\dots,b^k-1\right\}, $$
		where $0\leq \zeta_s \leq b^{-k}-b^{-m}$ for all $s\in\{0,1,\dots,b^k-1\}$. Therefore the largest gap between two consecutive numbers in $\{\mu_m(n)\mid n \in A_k\}\cup \{0,1\}$
	 is at most $2b^{-k}-b^{-m}$. As a consequence, the area of empty boxes $B=I_x \times I_y$ such that $b^m |I_x| \in\{b^{k}+1,\dots, b^{k+1}\}$ is bounded by
	 $$ |B|\leq \frac{b^{k+1}}{b^m} \left(\frac{2}{b^k}-\frac{1}{b^m}\right)=\frac{2b}{b^m}-\frac{b^{k+1}}{b^{2m}}\leq \frac{2b}{b^m}-\frac{b^2}{b^{2m}}=\frac{2b}{b^m}\left(1-\frac{b}{2b^m}\right),$$
	whereas boxes of length $b^m|I_x|\leq b$ have area less than $\frac{b}{b^m}$. \\
	We employ the following simple observation to derive the result also for NUT nets. Let such a net $\cP'$ be generated by $J_m$ and a NUT matrix $C_2$.
	If we multiply both matrices with the same regular matrix from the right side,
the new matrices generate the same point set (see e.g.~\cite[Lemma 4.61]{Dick}). Hence, the matrices $J_m C_2^{-1} J_m$ and $J_m$
generate $\cP'$. Note that $J_m C_2^{-1} J_m$ is a NLT matrix. Further, if we switch the roles of $J_m C_2^{-1} J_m$ and $J_m$, then the matrices generate $\{(y,x)\mid (x,y)\in \cP'\}$. Clearly the latter
set of points has same dispersion as $\cP'$, as for every empty box $B=I_x \times I_y$ in $\cP'$ we can find an empty box of same size
in $\{(y,x)\mid (x,y)\in \cP'\}$, namely $B'=I_y \times I_x$. Hence, for every NUT net we can find a corresponding NLT net with same dispersion (and the other way round), which yields the result for NUT nets.
\end{proof}

Note that for NUT or NLT nets in base 2 this theorem implies $\mathrm{disp}(\cP)\leq \frac{4}{2^m}$ for all $m\in\NN$. We will see in the subsequent section that the constant $4$ in this upper bound cannot be replaced by a smaller constant as $m$ grows larger. \\
Next we improve the lower bound on the dispersion of digital $(0,m,2)$-nets as given in Theorem~\ref{trivlower} for NUT and NLT nets in the dyadic case $b=2$. We will see in the subsequent section that this lower bound is sharp.

\begin{thm} \label{better}
   Let $m\geq 5$ and $\cP$ be a digital $(0,m,2)$ NUT or NLT net in base 2. Then
     $$ \disp(\cP)\geq \frac52 \frac{1}{2^m}.$$
\end{thm}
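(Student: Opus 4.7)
The plan is to reduce to NLT nets and then induct on $m$. First, by the reflection argument used in the proof of Theorem~\ref{NUT}---any NUT net $\cP'$ generated by $(J_m, C_2)$ coincides with the net generated by $(J_m C_2^{-1} J_m, J_m)$, and swapping the two matrices yields the coordinate-swap $\{(y,x):(x,y)\in\cP'\}$, which has the same dispersion and, because $J_m C_2^{-1} J_m$ is lower triangular, is itself a NLT net---it suffices to prove the bound for NLT nets $\cP=\{(n/2^m,\mu_m(n)):0\le n<2^m\}$ with unit-diagonal lower-triangular $C_2=(c_{i,j})$.

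For the inductive step at $m\ge 6$, I would observe that the first binary digit of $\mu_m(n)$ equals the lowest bit $e_1$ of $n$ (since $c_{1,1}=1$), so the lower half $\cP_L=\cP\cap([0,1)\times[0,\tfrac12))$ consists exactly of the $2^{m-1}$ points indexed by even $n=2k$. A direct digit computation gives $\mu_m(2k)=\tilde\mu_{m-1}(k)/2$, where $\tilde\mu_{m-1}$ is generated by $J_{m-1}$ together with the $(m-1)\times(m-1)$ NLT matrix $\tilde C_2$ with entries $\tilde c_{i,j}:=c_{i+1,j+1}$. Hence $\cP_L$ is the image of a NLT $(0,m-1,2)$-net $\tilde\cP\subset[0,1)^2$ under the vertical scaling $(x,y)\mapsto(x,y/2)$. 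By the inductive hypothesis $\tilde\cP$ admits an empty box $\tilde B=\tilde I_x\times\tilde I_y$ with $\lambda(\tilde B)\ge 5/(2\cdot 2^{m-1})$, and then $B:=\tilde I_x\times\tfrac12\tilde I_y\subset[0,1)\times[0,\tfrac12)$ is empty in $\cP$ (it avoids $\cP_L$ through the scaling correspondence, and it avoids the upper half of $\cP$ because its $y$-range lies in $[0,\tfrac12)$) with area $\lambda(\tilde B)/2\ge 5/(2\cdot 2^m)$, closing the induction.

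The main obstacle is the base case $m=5$, which accounts for the hypothesis $m\ge 5$ in the statement. Here I would exploit several rigid features that every NLT $(0,5,2)$-net shares: the top point $P^*=(x^*,31/32)$, whose index $n^*$ is the unique odd integer determined by $\sum_{j\le i}c_{i,j}\,e_j^*\equiv 1\pmod 2$ for $i=1,\dots,5$; the second-highest point $Q$, which the same equations force to lie at $x_Q=x^*\pm\tfrac12$ with $y_Q=30/32$; and the point $(1/2,1/32)$ corresponding to $n=2^{4}$, which always belongs to such a net because $\mu_5(16)=1/32$ irrespective of the below-diagonal entries of $C_2$. For each essentially different position of $x^*\in\{1,3,\dots,31\}/32$, I would construct an explicit empty axes-parallel box of area at least $5/64$ by extending across two of these named points, certifying emptiness via the hierarchical digit structure (for every $d\le m$ the top-$2^{-d}$ strip contains $2^{m-d}$ points whose $x$-coordinates form an arithmetic progression of step $2^d/2^m$). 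This finite case verification is routine but forms the bulk of the base-case argument.
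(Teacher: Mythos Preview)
Your inductive step is correct and is a genuinely different organisation from the paper's argument. The paper reduces to NUT (not NLT) nets and exploits the fact that for a NUT matrix the values $\nu_m(1),\dots,\nu_m(31)$ depend only on the ten entries $c_{i,j}$ with $1\le i<j\le 5$, \emph{independently of $m$}; a single computer check of the $2^{10}$ possibilities then yields an empty box of area $\ge\frac{5}{2}\cdot 2^{-m}$ inside $[0,32/2^m]\times[0,1]$ for every $m\ge 5$ at once. Your route instead reduces to NLT nets and uses the clean observation that the even-indexed points form a vertically halved copy of an NLT $(0,m-1,2)$-net, which makes the inequality self-improving in $m$. Both approaches ultimately rest on a finite verification of the $2^{10}$ nets; the paper does this uniformly in $m$ via the NUT localisation, while you push it entirely into the base case $m=5$.

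The weak point of your proposal is the base-case sketch. Casing on the sixteen possible positions of $x^*$ is not a complete case analysis: the equations $y_1=\dots=y_5=1$ determine $x^*$ from the ten subdiagonal entries, but the map is many-to-one (each value of $x^*$ is compatible with $2^6$ matrices), so the three ``named points'' $(x^*,31/32)$, $(x^*\pm\tfrac12,30/32)$, $(\tfrac12,1/32)$ together with the arithmetic-progression strip structure do not in general pin down a single empty box. For instance, when $x^*=1/32$ the obvious candidates built from those three points have area below $5/64$, and one has to look at further points whose positions depend on additional matrix entries. So the base case will still require a finer subdivision---effectively the same $2^{10}$-case check the paper performs---rather than a sixteen-case argument. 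Once that is acknowledged, your proof is complete and gives, as a by-product, the same localisation as the paper: unwinding the induction shows the empty box always lies in $[0,1]\times[0,32/2^m)$.
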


\begin{proof}
  In this proof it is more convenient to show the result for NUT nets. Then it follows also for NLT nets by the arguments in the second half of the proof of Theorem~\ref{NUT}. \\
  Consider a NUT net generated by $J_m$ and the matrix $C_2$ as in~\eqref{NUTmatrix}. Such a net can also be defined via 
	$$ \cP=\left\{\left(\frac{n}{2^m},\nu_m(n)\right):n=0,1,\dots,2^m-1\right\}, $$
	where for $n\in\{0,1,\dots,2^m-1\}$ with $n=[e_m\dots e_1]_2$ we define 
	$$\nu_m(n)=\frac{e_1 \oplus c_{1,2}e_2\oplus\dots\oplus  c_{1,m}e_m}{2}+\dots+\frac{e_{m-1}\oplus c_{m-1,m} e_m}{2^{m-1}}+\frac{e_m}{2^m}.$$
	By this definition it is obvious that the numbers $\nu_m(n)$ for $n\in\{0,\dots,7\}$ depend only on the matrix entries $c_{1,2}$, $c_{1,3}$ and $c_{2,3}$. The numbers $\nu_m(n)$ for $n\in\{0,\dots,15\}$ depend only on the matrix entries $c_{1,2}$, $c_{1,3}$, $c_{2,3}$, $c_{1,4}$, $c_{2,4}$ and $c_{3,4}$.  The numbers $\nu_m(n)$ for $n\in\{0,\dots,31\}$ depend only on the six matrix entries already mentioned before and additionally on $c_{1,5}$, $c_{2,5}$, $c_{3,5}$ and $c_{4,5}$.
	Now with a software such as \textsc{mathematica} it can be checked that for every NUT net there are certain gaps between consecutive numbers in the string $(\nu_m(1),\dots,\nu_m(31))$: either there exist 4 consecutive numbers in this string such that the largest gap between these numbers (sorted from smallest to largest) is at least $\frac12$, or there exist 9 consecutive numbers such that the largest gap between these numbers is at least $\frac14$, or there exist 19 consecutive numbers such that the largest gap between these numbers is at least $\frac18$. In fact, one of these three situations occurs in most of the $2^{10}$ possible cases for the sequence $(c_{1,2},\dots,c_{4,5})$ of the 10 matrix entries mentioned above, which all imply boxes of size $\frac52\frac{1}{2^m}$. In the remaining cases, we still find either 2 consecutive numbers with distance at least $\frac{29}{32}$ or 28 consecutive numbers with a gap of at least $\frac{3}{32}$, which imply the existence of empty boxes of size $\frac{87}{32}\frac{1}{2^m}>\frac52\frac{1}{2^m}$. This completes the proof, but we would like to add a few words to the procedure: Actually, one does not really have to check all $2^{10}$ cases, as in many cases we find empty boxes of the desired size faster. For instance, if $(c_{1,2},c_{1,3},c_{2,3})=(1,1,1)$, then $(\nu_m(3),\dots,\nu_m(6))=\frac18(2,7,3,1)$,
	or if $(c_{1,2},c_{1,3},c_{2,3})=(0,1,0)$, then $(\nu_m(3),\dots,\nu_m(6))=\frac18(6,5,1,7)$, and therefore there remain only $3\cdot 2^8$ cases left to check. If $(c_{1,2},c_{1,3},c_{2,3})\in\{(0,0,1),(1,0,1),(1,1,0)\}$, then for any choice of $c_{1,4}$, $c_{2,4}$ and $c_{3,4}$ we find 4 consecutive numbers in the string $(\nu_m(1),\dots,\nu_m(15))$ such that the largest gap between these numbers is at least $\frac12$ or 9 consecutive numbers such that the largest gap is at least $\frac14$. Also for $(c_{1,2},c_{1,3},c_{2,3})\in\{(0,0,0),(0,1,1),(1,0,0)\}$ for most instances of $c_{1,4}$, $c_{2,4}$ and $c_{3,4}$ we can find such gaps, only in 10 instances this is not the case. So far these are only 48 more cases to check with the computer. For the remaining 10 instances (namely $(c_{1,2},\dots,c_{3,4})\in\{(0,0,0,0,0,0),(0,0,0,0,0,1),(0,0,0,1,1,1),(0,1,1,0,0,0),(0,1,1,0,0,1),(0,1,1,1,1,0)$,\\$(1,0,0,0,0,0),(1,0,0,0,1,1),(1,0,0,1,0,0),(1,0,0,1,0,1)\}$) we have to look at the whole string $(\nu_m(1),\dots,\nu_m(31))$ and therefore additionally at the matrix entries $c_{1,5}$, $c_{2,5}$, $c_{3,5}$ and $c_{4,5}$. So only 160 cases remain to be checked with the computer. Allover, we had 216 cases to check. Note that we proved even more: every NUT net with $m\geq 5$ contains an empty box of size at least $\frac52\frac{1}{2^m}$ in the region $[0,\frac{32}{2^m}]\times [0,1]$ and hence every NLT net contains such a box in $[0,1]\times[0,\frac{32}{2^m}] $.
\end{proof}

The upper bounds on the dispersion of $b$-adic nets suggest a worse behaviour of the constant $c$ in the dispersion bound $c/N$ as the base $b$ grows, which is not reflected in the trivial lower bound stated in Proposition~\ref{trivlower}. In the following we prove another simple lower bound on the dispersion of $b$-adic NUT and NLT nets which shows that (at least under a certain condition) the constant indeed increases with $b$.

\begin{prop}
  Let $\cP$ be a $b$-adic digital NUT $(0,m,2)$-net with $c_{1,1}=1$ or a NLT net with $c_{m,m}=1$. Then we have
  $$ \disp(\cP)\geq \max_{w=0,1,\dots,b-1} \frac{w+1}{b}\frac{b-w}{b^m}
    =\begin{cases}
        \frac12\left(\frac{b}{2}+1\right)\frac{1}{b^m} & \text{if $b$ is even}, \\
        \frac{1}{b}\left(\frac{b+1}{2}\right)^2\frac{1}{b^m} & \text{if $b$ is odd}. \\
     \end{cases}$$
\end{prop}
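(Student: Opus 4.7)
The plan is to first establish the lower bound for NLT nets satisfying $c_{m,m}=1$ by exhibiting an explicit empty box, and then deduce the statement for NUT nets with $c_{1,1}=1$ via the matrix/coordinate-swap construction from the proof of Theorem~\ref{NUT}. So let $\cP$ be an NLT $(0,m,2)$-net with $c_{m,m}=1$. First I would identify a family of ``low'' points: for $n=jb^{m-1}$ (so $e_m=j$ and $e_i=0$ for $i<m$) with $j\in\{0,1,\dots,b-1\}$, the defining formula for $\mu_m$ collapses to $\mu_m(jb^{m-1})=(c_{m,m}\odot j)/b^m=j/b^m$. Hence each of the $b$ points $(j/b,\,j/b^m)$ belongs to $\cP$.

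Second, I would show that these are the only points of $\cP$ whose $y$-coordinate is strictly less than $b^{1-m}$. The condition $\mu_m(n)<b^{1-m}$ forces the first $m-1$ base-$b$ digits of $\mu_m(n)$ to vanish. Since the $i$-th such digit equals $c_{i,1}\odot e_1\oplus\cdots\oplus c_{i,i}\odot e_i$, and since every diagonal entry $c_{i,i}$ is a nonzero element of the field $\ZZ_b$, a direct induction on $i$ yields $e_1=\cdots=e_{m-1}=0$, forcing $n=jb^{m-1}$.

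Third, I fix $w\in\{0,1,\dots,b-1\}$ and look inside the vertical strip $[0,(w+1)/b)$. The points of $\cP$ contained in this strip whose $y$-coordinate is below $1/b^{m-1}$ are exactly $(j/b,\,j/b^m)$ for $j=0,1,\dots,w$, whose largest $y$-value is $w/b^m$. Consequently, for each $\delta>0$ the half-open box
\[ B_\delta:=\left[0,\frac{w+1}{b}\right)\times\left[\frac{w}{b^m}+\delta,\,\frac{1}{b^{m-1}}\right) \]
contains no point of $\cP$, and letting $\delta\downarrow 0$ yields $\disp(\cP)\geq \frac{w+1}{b}\cdot\frac{b-w}{b^m}$. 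Maximising over $w$ produces the stated bound; the integer optimum of $(w+1)(b-w)$ is attained at $w=\lfloor b/2\rfloor$, which gives the two explicit closed forms in the even and odd cases.

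Finally, for a NUT net with $c_{1,1}=1$ the argument from Theorem~\ref{NUT} converts $C_2$ into the nonsingular lower triangular matrix $\widetilde C_2:=J_m C_2^{-1}J_m$, whose $(m,m)$-entry equals the $(1,1)$-entry of $C_2^{-1}$, namely $1/c_{1,1}=1$ (the inverse of an upper triangular matrix has reciprocal diagonal). The associated NLT net has the same dispersion as the given NUT net after swapping the coordinates, so the NLT bound just proved carries over. I do not foresee any serious obstacle: the one step requiring care is the digit-by-digit argument used to pin down the low-lying points, and the primality of $b$ together with the nonvanishing of the $c_{i,i}$ makes this essentially immediate.
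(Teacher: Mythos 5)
Your proof is correct and is essentially the paper's argument run in the mirror-image direction: the paper works directly with the NUT net, where the $b$ points $(n/b^m,\,n/b)$, $n=0,\dots,b-1$, immediately give the empty boxes $\bigl(\tfrac{w}{b^m},\tfrac{b}{b^m}\bigr)\times\bigl(0,\tfrac{w+1}{b}\bigr)$, while you work with the NLT net and its transposed boxes, which forces you to add the (correct) digit-induction showing that $n=jb^{m-1}$ are the only indices with $\mu_m(n)<b^{1-m}$ --- a step that is trivial on the NUT side since there the relevant coordinate is $n/b^m$ itself. Both the box construction and the transfer between NUT and NLT via $J_mC_2^{-1}J_m$ match the paper, so the proposal is sound.
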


\begin{proof}
It is enough to show the result for NUT nets. Note that for any $b$-adic NUT net $\cP$ with $c_{1,1}=1$ we have $\nu_m(n)=\frac{n}{b}$ for $n\in\{0,1,\dots,b-1\}$.
This implies that the intervals
   $$ J_w=\left(\frac{w}{b^m},\frac{b}{b^m}\right)\times \left(0,\frac{w+1}{b}\right)  $$
  are empty for all $w\in\{0,1,\dots,b-1\}$ and the result is verified.
\end{proof}

\section{Precise dispersion of particular digital $(0,m,2)$-nets}

The Hammersley point set $\cH_{b,m}$ is a NUT (and NLT) net generated by $J_m$ as above and $C_2$ the $m\times m$ identity matrix over $\ZZ_b$. 
This construction works for any $b\geq 2$ (not only for primes). Obviously, the Hammersley point set $\cH_{b,m}$ can also be defined via
$$ \cH_{b,m}:=\left\{\left(\frac{n}{b^m},\varphi_b(n)\right):n=0,1,\dots,b^m-1\right\}, $$ 
where for $n\in\{0,1,\dots,b^m-1\}$ with $n=[e_m\dots e_1]_b$ we define 
$$\varphi_b(n)=\frac{e_1}{b}+\frac{e_2}{b^2}+\dots+\frac{e_m}{b^{m}}.$$
 We prove the following theorem on the exact dispersion of $\cH_{2,m}$. Dumitrescu and Jiang~\cite{Dumi} already found $\disp(\cH_{2,m})\leq\frac{4}{2^m}$,
which follows also from Theorem~\ref{NUT}. Rote and Tichy~\cite{Rote} proved the bound $\disp(\cP)\leq \frac{c}{N}$ for a constant $c>0$ and $N=b^m$ even earlier.  In the same paper, they also studied the dispersion of the Hammersley point set with respect to arbitrary (not necessarily axes-parallel) rectangles and found that this quantity is of exact order $\frac{1}{\sqrt{N}}$. In the following, for a set $M\subseteq \{1,\dots,2^m-1\}$ and a bijective function $\varphi: \{1,\dots,2^m-1\}\to \{1,\frac{1}{2^m},\dots,\frac{2^m-1}{2^m}\}$ we define $\varphi(M):=\{\varphi(x): x\in M\}$. Further, we call elements $x_1,x_2\in M$ consecutive in $\varphi(M)$ if $\varphi(x_1)<\varphi(x_2)$ and there is no $x^*\in M$ such that $\varphi(x_1)<\varphi(x^*)<\varphi(x_2)$. 

\begin{thm}
  For all $m\geq 4$ we have 
	  $$ \disp(\cH_{2,m})=\frac{3}{2^m}\left(1-\frac{3}{2^m}\right). $$
		Further we have $\disp(\cH_{2,1})=\frac12$, $\disp(\cH_{2,2})=\frac{3}{8}$ and $\disp(\cH_{2,3})=\frac{1}{4}$.
\end{thm}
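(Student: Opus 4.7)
The plan is to prove the equality for $m\ge 4$ by matching lower and upper bounds, and to dispose of the small cases $m=1,2,3$ by direct inspection. I would start with the lower bound by exhibiting an explicit empty box. The key observation is that bit-reversal gives $\varphi_2(2^{m-1}-1)=(2^m-2)/2^m$ and $\varphi_2(2^{m-1})=1/2^m$, so the two $x$-consecutive Hammersley points at indices $2^{m-1}-1$ and $2^{m-1}$ sit near the top and bottom of the unit square; consequently the open box
\[
\left(\frac{2^{m-1}-2}{2^m},\frac{2^{m-1}+1}{2^m}\right)\times\left(\frac{1}{2^m},\frac{2^m-2}{2^m}\right)
\]
is empty and has the claimed area $\tfrac{3}{2^m}\bigl(1-\tfrac{3}{2^m}\bigr)$.

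For the upper bound I would take an arbitrary empty box $B=I_x\times I_y$, enlarge it to a maximal empty box whose corners lie on the grid $\{0,1/2^m,\dots,1\}^2$, and write $|I_x|=w/2^m$ and $|I_y|=h/2^m$ with integers $1\le w,h\le 2^m$; the task reduces to showing $wh\le 3(2^m-3)$. A first reduction uses that the map $(x,y)\mapsto(y,x)$ preserves $\cH_{2,m}$ (because the index-to-coordinate correspondence is a bit-reversal involution), so without loss of generality $w\le h$. The cases $w=1,2$ are immediate from the elementary estimate $wh\le 2(2^m-1)\le 3(2^m-3)$, which is valid for $m\ge 4$. The case $w=3$ already contains the extremal construction: the two interior $x$-indices $n,n+1$ produce two $\varphi_2$-values whose positions together with $\{0,1\}$ split $[0,1]$ into four sub-intervals, the longest of which bounds $h/2^m$; splitting on the number $t$ of trailing ones of $n$, using $\varphi_2(n+1)-\varphi_2(n)=1/2$ when $n$ is even and $\varphi_2(n)-\varphi_2(n+1)=1-3/2^{t+1}$ when $n$ is odd, one checks that the maximum value of this longest sub-interval equals $(2^m-3)/2^m$ and is uniquely attained at $n=2^{m-1}-1$ (where $t=m-1$), matching the lower bound.

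The main obstacle is the range $w\ge 4$, where I would prove the strict inequality $wh<3(2^m-3)$ by induction on $m$, with the base case $m=4$ handled by exhaustive check. If $I_x\subseteq[0,1/2)$ or $I_x\subseteq[1/2,1]$, the self-similar decomposition of $\cH_{2,m}$---the left half coincides with $\cH_{2,m-1}$ after scaling the $x$-axis by $1/2$, and the right half is a translate of the same---pulls $B$ back to an empty box $B'$ in $\cH_{2,m-1}$ with $\lambda(B')=2\lambda(B)$, so the inductive hypothesis gives $\lambda(B)\le\tfrac{1}{2}\cdot\tfrac{3(2^{m-1}-3)}{2^{2(m-1)}}=\tfrac{3\cdot 2^m-18}{2^{2m}}<\tfrac{3(2^m-3)}{2^{2m}}$. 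The substantive case is when $I_x$ straddles $1/2$: then $\{2^{m-1}-1,2^{m-1}\}\subseteq S:=\{n:n/2^m\in I_x\}$ (the single degenerate exception $a=2^{m-1}-1$ reduces to the non-straddling right-half argument), so $\{1/2^m,(2^m-2)/2^m\}\subseteq\varphi_2(S)$ forces $h\le 2^m-3$; for $w=3$ this is tight and produces the extremal box, but for $w\ge 4$ the further indices $2^{m-1}\pm j$ in $S$ necessarily contribute $\varphi_2$-values clustering near $1/2$ (for instance $\varphi_2(2^{m-1}-2)=(2^{m-1}-2)/2^m$ and $\varphi_2(2^{m-1}+1)=(2^{m-1}+1)/2^m$), strictly subdividing the central gap. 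A short case analysis on the pair $(k_L,k_R):=(|S\cap[1,2^{m-1}-2]|,|S\cap[2^{m-1}+1,2^m-1]|)$ with $k_L+k_R=w-3$ then shows the largest remaining sub-gap drops enough to give $wh<3(2^m-3)$ in every case.
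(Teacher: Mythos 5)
Your overall strategy is coherent and genuinely different from the paper's: the lower-bound box, the swap symmetry of $\cH_{2,m}$ under $(x,y)\mapsto(y,x)$, the complete treatment of $w\le 3$ (where the extremal gap $1-\tfrac{3}{2^m}$ at $n=2^{m-1}-1$ is correctly identified), and the self-similarity/induction argument for boxes confined to one half are all sound; the paper instead classifies the $\varphi_2$-gaps of windows of $2^k$, $3\cdot 2^{k-1}$ and $2^{k+1}-3$ consecutive integers directly from binary expansions. However, there is a genuine gap precisely where the theorem lives: the straddling case with $w\ge 4$. The $w-3$ extra interior indices contribute the two interleaved families $1-\tfrac{2}{2^m}-\varphi_2(j)$ for $j\le k_L$ and $\tfrac{1}{2^m}+\varphi_2(j)$ for $j\le k_R$, and neither family alone suffices to subdivide the central gap: for $(k_L,k_R)=(2,2)$, $w=7$, each family separately still leaves a gap of length $\tfrac12-\tfrac{3}{2^m}$, which would give $wh=3.5\cdot 2^m-21>3\cdot 2^m-9$ for $m\ge 5$; only the joint interleaving brings the gap down to $\tfrac14$. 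Moreover the margin is thin, e.g.\ $(k_L,k_R)=(1,2)$ yields $wh=3\cdot 2^m-18$, only $9$ below the target. So the ``short case analysis on $(k_L,k_R)$'' must control the merged family for all $k_L+k_R=w-3$ and all $w$ up to order $2^m$; that is the combinatorial heart of the theorem (essentially the gap classification the paper carries out) and it is asserted rather than performed.

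A second, smaller defect: the degenerate window $a=2^{m-1}-1$ does not ``reduce to the non-straddling right-half argument,'' because $I_x$ then still meets $[0,\tfrac12)$. Restricting to $I_x\cap[\tfrac12,1)$ and pulling back only yields $(w-1)h\le 3\cdot 2^m-18$, and recovering the lost column with the crude bound $h\le 2^{m-1}$ gives $wh\le 3\cdot 2^m-18+2^{m-1}$, which exceeds $3\cdot 2^m-9$ for $m\ge 5$; this case needs its own estimate (the interior indices $2^{m-1},\dots,2^{m-1}+w-2$ have $\varphi_2$-values forming a shifted van der Corput prefix, so the estimate is available, but it must be written out). Two cosmetic points: two interior values together with $\{0,1\}$ cut $[0,1]$ into three, not four, subintervals; and the base case $m=4$ as well as $m\in\{1,2,3\}$ are deferred entirely to unexhibited exhaustive checks.
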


\begin{proof}
   We only prove the case $m\geq 4$. Note that the interval $$B=\left[\frac{1}{2}-\frac{1}{2^{m-1}},\frac12+\frac{1}{2^m}\right)\times \left[\frac{1}{2^m},1-\frac{2}{2^m}\right)$$ with area $\lambda(B)=\frac{3}{2^m}\left(1-\frac{3}{2^m}\right)$ contains no point of $\cH_{2,m}$, since $\varphi(2^{m-1}-1)=1-\frac{2}{2^m}$ and $\varphi(2^{m-1})=\frac{1}{2^m}$.
	Hence $\disp(\cH_{2,m})\geq \frac{3}{2^m}\left(1-\frac{3}{2^m}\right)$. Now we show the $\leq$-part. \\
	We consider empty boxes $B=I_x \times I_y$, where $I_x, I_y \subset [0,1]$. We write $|I_x|=\ell_x$ and $|I_y|=\ell_y$. Clearly, we can assume $\ell_x, \ell_y \in \{\frac{1}{2^m},\dots,\frac{2^m-1}{2^m}, 1\}$. We prove the Theorem by looking at different instances for $\ell_x$. First, boxes with $2^m\ell_x\in\{1,2\}$ are trivially smaller than the given dispersion.\\
For $k\in\{1,\dots,m-1\}$ consider a subset $A_k$ of $\{1,\dots,2^m-1\}$ consisting of $2^{k}$ consecutive numbers.	We represent these numbers in dyadic expansion of the form $[e_m\dots e_{k+1}e_k\dots e_1]_2$, where $e_j\in\{0,1\}$ for all $j\in\{1,\dots,m\}$. Clearly, the strings $(e_k\dots e_1)$ of the last $k$ digits of the integers in $A_k$ are pairwise distinct. We consider distinct elements $x_1,x_2\in A_k$ consecutive in $\varphi(A_k)$. Then the dyadic expansions of $x_1$ and $x_2$ are of the form
\begin{align*}
     x_1=& [e_m\dots e_1]=[\vecs_1|1\dots 10 e_{i-1}\dots e_1]_2, \\
	   x_2=& [e_m' \dots e_1']=[\vecs_2|0\dots 01 e_{i-1}\dots e_1]_2,
\end{align*}
where $i\in\{1,\dots k\}$ is maximal such that $e_i=0$ and where $\vecs_1$ and $\vecs_2$ stand for the string $(e_m\dots e_{k+1})$ of the last $m-k$ digits. Such an index $i$ exists, as otherwise $\varphi(x_1)$ were already maximal in $\varphi(A_k)$. We have $\vecs_1, \vecs_2 \in \{\vecp_1,\vecp_2\}$, where $\vecp_1=(e_m\dots e_{l+1}01\dots 1)$ and $\vecp_2=(e_m\dots e_{l+1}10\dots 0)$, where $l\in\{k+1,\dots,m\}$ is minimal such that $e_l=0$. (Note that $(e_m\dots e_{k+1})=(1,\dots,1)$ implies that $(e_m'\dots e_{k+1}')=(1,\dots,1)$ as well, in which case $\varphi(x_2)-\varphi(x_1)=2^{-k}$. The situation where $(e_m\dots e_{k+1})=(1,\dots,1,0)$ and  $(e_m'\dots e_{k+1}')=(1,\dots,1,1)$ is covered in the following for $l=k+1$. So we can assume the existence of such an $l$.) Hence
$$ \varphi(x_2)-\varphi(x_1)=\frac{1}{2^k}+\sum_{j=k+1}^{m}\frac{e_j'-e_j}{2^j}. $$
We have three cases:
\begin{itemize}
   \item If $\vecs_1=\vecs_2$, i.e. $e_j'=e_j$ for all $j\in\{k+1,\dots,m\}$, then $\varphi(x_2)-\varphi(x_1)=\frac{1}{2^k}$.
	 \item If $\vecs_1=\vecp_2$ and $\vecs_2=\vecp_1$, then
	$$ \sum_{j=k+1}^{m}\frac{e_j'-e_j}{2^j}=\frac{1}{2^k}-\frac{3}{2^l} $$
	and therefore $\varphi(x_2)-\varphi(x_1)=\frac{1}{2^{k-1}}-\frac{3}{2^l}$.
	\item If  $\vecs_1=\vecp_1$ and $\vecs_2=\vecp_2$, then $\varphi(x_2)-\varphi(x_1)=\frac{3}{2^l}$.
\end{itemize}
Further there exist elements $x_1,x_2\in A_k$ with $\varphi(x_1)\leq 2^{-k}$ and $1-\varphi(x_2)\leq 2^{-k}$ (namely those elements which end with $k$ zero- or one-digits, respectively). Hence, the largest gap between two consecutive elements in the set $\{\varphi(x): x\in A_k\}\cup\{0,1\}$ is $\max\{\frac{1}{2^{k-1}}-\frac{3}{2^l},\frac{3}{2^l}\}$,
where $l\in\{k+1,\dots,m-1\}$. Assume that $2^m\ell_x\in\{2^k+1,\dots,3\,2^{k-1}\}$. Then $\ell_y\leq \max\{\frac{1}{2^{k-1}}-\frac{3}{2^l},\frac{3}{2^l}\}$ and therefore
\begin{align*} 
   |B|\leq& \frac{3}{2^m}\left(1-\frac{3}{2^m}\right).
\end{align*}

Now we consider a subset $B_k=\{x_1,\dots, x_{3\cdot 2^{k-1}}\}$ of $\{0,1,\dots,2^m-1\}$ consisting of $3\cdot 2^{k-1}$ consecutive numbers, where $k\in\{3,\dots,m-1\}$.
We need a tighter bound for the largest gap of two consecutive numbers in $\varphi(B_k)$. 
Let $\bar{B}_k=\{x_1,\dots,x_{2^k}\}$. Let $y_1, y_2\in \bar{B}_k$ consecutive in $\varphi(\bar{B}_k)$ such that $\varphi(y_2)-\varphi(y_1)=\frac{1}{2^{k-1}}-\frac{3}{2^l}$. Then $y_1=[\vecp_2|1\dots 10 e_{i-1}\dots e_1]_2$ and $y_2= [\vecp_1|0\dots 01 e_{i-1}\dots e_1]_2$. Since $y_1-y_2=2^{k+1}-3\cdot 2^{i-1}$, we must have $i=k$; i.e. actually
\begin{align*}
     y_1=& [\vecp_2|0 e_{i-1}\dots e_1]_2, \\
	   y_2=& [\vecp_1|1 e_{i-1}\dots e_1]_2.
\end{align*}
Set $y^*=[\vecp_2|1 e_{i-1}\dots e_1]_2$. We have $\varphi(y_1)<\varphi(y^*)<\varphi(y_2)$ and $y_2<y_1<y^*$. Since $y^*-y_1=2^{k-1}$, we have $y^*\in B_k$. Therefore, any gap in $\varphi(B_k)$ of length $\frac{1}{2^{k-1}}-\frac{3}{2^l}$ does no longer exist, and since $\varphi(y_2)-\varphi(y^*)<\frac{1}{2^k}$ as well as $\varphi(y^*)-\varphi(y_1)=\frac{1}{2^k}$, the longest possible gaps in $\varphi(B_k)$ are of length $\frac{3}{2^{k+1}}$. This proves the Theorem for empty boxes $B$ such that $2^m \ell_x \in\{3\cdot 2^{k-1},\dots, 2^{k+1}-3\}$ with $k\in\{3,\dots,m-1\}$. \\
Next consider a subset $C_k=\{x_1,\dots, x_{2^{k+1}-3}\}$ of $\{0,1,\dots,2^m-1\}$ consisting of $2^{k+1}-3$ consecutive numbers, where $k\in\{3,\dots,m-1\}$.
Let $\bar{C}_k=\{x_1,\dots,x_{2^k}\}$. Let $z_1, z_2\in \bar{C}_k$ consecutive in $\varphi(\bar{C}_k)$ such that $\varphi(z_2)-\varphi(z_1)=\frac{3}{2^{k+1}}$. This happens if $z_1$ and $z_2$ are of the form
\begin{align*}
     z_1=& [e_m\dots e_{k+2}0|1\dots 10 e_{i-1}\dots e_1]_2, \\
	   z_2=& [e_m\dots e_{k+2}1|0\dots 01 e_{i-1}\dots e_1]_2.
\end{align*}
Then with $z^*=[e_m\dots e_{k+2}1|1\dots 10 e_{i-1}\dots e_1]_2$ we have $\varphi(z_1)<\varphi(z^*)<\varphi(z_2)$ and $z_1<z_2<z^*$. Since $z^*-z_2=2^{k}-3\cdot 2^{i-1}\leq 2^k-3$, we have $z^*\in C_k$. Hence, the longest gap in $\varphi(C_k)$ is bounded by $\frac{1}{2^k}$, and the result follows for empty boxes $B$ such that $2^m \ell_x \in\{2^{k+1}-2,2^{k+1}-1,2^{k+1}\}$ with $k\in\{3,\dots,m-1\}$.

	Consider three consecutive numbers $x_1<x_2<x_3$ in $\{1,\dots,2^m-1\}$. The difference $\varphi(x_2)-\varphi(x_1)$ can only be larger than $\frac12$ if $x_1$ and $x_2$ are of the form
	$x_1=[e_m\dots e_3 0|0]$ and $x_2=[e_m\dots e_3 1|1]$, but then $x_2-x_1=3$ - a contradiction. Hence the result follows for empty boxes with $2^m\ell_x=4$.
	Consider six consecutive numbers $x_1<\dots<x_6$ in $\{1,\dots,2^m-1\}$. A gap in $\varphi(\{x_1,\dots,x_4\})$ larger than $\frac14$ can only occur in three cases: If $z_1,z_2 \in \{x_1,\dots,x_4\}$ are distinct, then this can only happen if $z_1=[e_m\dots e_40|00]$ and $z_2=[e_m\dots e_41|10]$ or $z_1=[e_m\dots e_40|01]$ and $z_2=[e_m\dots e_41|11]$ or $z_1=[e_m\dots e_40|10]$ and $z_2=[e_m\dots e_41|01]$. The first two cases are impossible since $|z_2-z_1|=6$, whereas in the third case the gap between $\varphi(z_1)$ and $\varphi(z_2)$ can be closed with $z^*=[e_m\dots e_41|10]$. Since $z^*-z_2=1$, we have $z^*\in \{x_1,\dots,x_6\}$. The largest gap in $\varphi(\{0,x_1,\dots,x_6,1\})$ is therefore bounded by $\frac14$, and the result follows for empty boxes with $2^m \ell_x\in \{7,8\}$. But that completes the proof.
\end{proof}

We show a lower bound on the dispersion of the $b$-adic Hammersley point set. We conjecture that we have inequality in Theorem~\ref{badic}.

\begin{thm} \label{badic}
  Let $b\geq 2$ and $m\geq 2$. Then we have
	   \begin{align*}\disp(\cH_{b,m})\geq& \frac{1}{b^{2m}}\max_{w=2,\dots,b}(w+1)\left((b-w+2)b^{m-1}-b-1\right) \\ 
		     =&\begin{cases}
				    \frac{3}{2^m}\left(1-\frac{3}{2^m}\right) & \text{if $b=2$,} \\
			      \frac{b+2}{2b^m}\left(\frac{b+4}{2b}-\frac{b+1}{b^m}\right) & \text{if $b\geq 4$ is even,} \\
						\frac{b+3}{2b^m}\left(\frac{b+3}{2b}-\frac{b+1}{b^m}\right) & \text{if $b$ is odd.}
						\end{cases}
			  \end{align*}.
\end{thm}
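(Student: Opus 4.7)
The plan is to prove the lower bound by exhibiting, for each $w\in\{2,\ldots,b\}$, a family of empty axes-parallel boxes whose areas tend to $\frac{w+1}{b^m}\bigl(\frac{b-w+2}{b}-\frac{b+1}{b^m}\bigr)$; since the dispersion is defined as a supremum, taking the limit and then maximizing over $w$ will yield the claim. The boxes are localized around the ``big jump'' of the $b$-adic radical inverse at $n=b^{m-1}$, where $\varphi_b$ drops from a value close to $1$ down to $1/b^m$.

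Fix $w\in\{2,\ldots,b\}$. For $r\in\{1,\ldots,w-1\}$, the integer $b^{m-1}-r$ has $b$-adic digits $e_1=b-r$, $e_2=\cdots=e_{m-1}=b-1$, $e_m=0$ (there are no carries beyond the first digit since $r\leq w-1\leq b-1$), so a direct computation gives
\begin{equation*}
\varphi_b(b^{m-1}-r)=\frac{b-r}{b}+\sum_{j=2}^{m-1}\frac{b-1}{b^j}=1-\frac{1}{b^{m-1}}-\frac{r-1}{b},
\end{equation*}
whereas $\varphi_b(b^{m-1})=1/b^m$. Consequently, the $w$ Hammersley points with first coordinates $(b^{m-1}-w+1)/b^m,\ldots,b^{m-1}/b^m$ have second coordinates consisting of the single small value $1/b^m$ and the $w-1$ large values $\{1-1/b^{m-1}-(r-1)/b:1\leq r\leq w-1\}$, the smallest of which is $\varphi_b(b^{m-1}-w+1)=(b-w+2)/b-1/b^{m-1}$.

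I would then consider, for any sufficiently small $\epsilon>0$, the box
\begin{equation*}
B_\epsilon=\left[\tfrac{b^{m-1}-w}{b^m}+\epsilon,\,\tfrac{b^{m-1}+1}{b^m}\right)\times\left[\tfrac{1}{b^m}+\epsilon,\,\tfrac{b-w+2}{b}-\tfrac{1}{b^{m-1}}\right).
\end{equation*}
The $x$-interval has length $(w+1)/b^m-\epsilon$ and captures exactly the $w$ integers $n\in\{b^{m-1}-w+1,\ldots,b^{m-1}\}$; the $y$-interval has length $(b-w+2)/b-(b+1)/b^m-\epsilon$, its left endpoint lies above $1/b^m$, and it is open at the right endpoint $\varphi_b(b^{m-1}-w+1)$, which is the smallest of the $w-1$ large $\varphi$-values. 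Hence $B_\epsilon$ is empty, and letting $\epsilon\to 0^+$ gives $\disp(\cH_{b,m})\geq (w+1)((b-w+2)b^{m-1}-(b+1))/b^{2m}$, which proves the master inequality after maximizing over $w$.

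The closed-form specializations come from elementary one-variable optimization of $(w+1)(b-w+2)=-w^2+(b+1)w+(b+2)$, which is maximized at $w=(b+1)/2$. For odd $b$ this is integral and gives $(w+1)(b-w+2)=((b+3)/2)^2$; for even $b$ the integer maximum on $\{2,\ldots,b\}$ is attained at $w=b/2$ (equivalently at $w=b/2+1$) and gives $(b+2)(b+4)/4$; and for $b=2$ only $w=2$ is admissible. Plugging the optimal $w$ into $(w+1)((b-w+2)b^{m-1}-b-1)/b^{2m}$ and factoring produces the three stated expressions. The only real subtlety is the digit-level computation of $\varphi_b(b^{m-1}-r)$ together with the observation that a one-sided $\epsilon$-shift of the $x$-interval trades one captured point for an extra $1/b^m$ of $x$-length, which is precisely what makes the target area achievable; everything else is bookkeeping and an optimization of a quadratic.
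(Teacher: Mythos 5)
Your construction is the same as the paper's: both build the empty box around the big jump of $\varphi_b$ at $n=b^{m-1}$, using exactly the $w$ points with $x$-coordinates $(b^{m-1}-w+1)/b^m,\dots,b^{m-1}/b^m$ and the same digit computation $\varphi_b(b^{m-1}-s)=\frac{b-s+1}{b}-\frac{1}{b^{m-1}}$; your half-open $\epsilon$-shifted box is just the careful version of the paper's open box $\left(\frac1b-\frac{w}{b^m},\frac1b+\frac{1}{b^m}\right)\times\left(\frac{1}{b^m},\frac{b-w+2}{b}-\frac{b}{b^m}\right)$, so the main inequality is proved correctly and by essentially the identical route.

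One caveat on the final "bookkeeping" step, which the paper itself leaves implicit: the quantity to maximize is $f(w)=(w+1)\bigl((b-w+2)b^{m-1}-b-1\bigr)$, not just the quadratic $(w+1)(b-w+2)$; the extra term $-(b+1)(w+1)$ breaks ties in favour of smaller $w$. Since $f(w+1)-f(w)=b^{m-1}(b-2w)-(b+1)$, your choices $w=b/2$ (even $b$) and $w=(b+1)/2$ (odd $b$) are indeed the maximizers whenever $b^{m-1}>b+1$, i.e.\ for all $m\geq 3$, and for even $b$ also for $m=2$ (note that $w=b/2$ and $w=b/2+1$ are \emph{not} interchangeable for $f$, only for the quadratic). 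But for $m=2$ and odd $b\geq 5$ the maximum sits at $w=(b-1)/2$ and strictly exceeds the displayed case value (e.g.\ $b=5$, $m=2$: $f(2)=57>56=f(3)$), so the asserted equality between $\max_w$ and the closed forms fails there. This is a defect of the theorem statement rather than of your box construction -- the inequality $\disp(\cH_{b,m})\geq$ (case formula) still holds -- but your optimization argument as written does not detect it.
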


\begin{proof}
   We prove that for every $w\in\{2,\dots,b\}$ the interval
	  $$ \left(\frac{1}{b}-\frac{w}{b^m},\frac{1}{b}+\frac{1}{b^m}\right)\times\left(\frac{1}{b^m},\frac{b-w+2}{b}-\frac{b}{b^m}\right) $$
		contains no element of $\cH_{b,m}$, which implies the lower bound. To this end, we show that for all $n\in\{b^{m-1}-w+1,\dots,b^{m-1}\}$ we have $\varphi_b(n)\notin \left(\frac{1}{b^m},\frac{b-w+2}{b}-\frac{b}{b^m}\right)$. Clearly $\varphi_b(b^{m-1})=\frac{1}{b^{m}}$. Let $s\in\{1,\dots,w-1\}$. Then
		\begin{align*}
		    \varphi_b(b^{m-1}-s)=&\varphi_b\left((b-s)+\sum_{j=1}^{m-2}(b-1)b^j\right)=\frac{b-s}{b}+\sum_{j=2}^{m-1}\frac{b-1}{b^j}\geq \frac{b-w+2}{b}-\frac{b}{b^m}.
		\end{align*}
		The proof is complete.
\end{proof}

Now we will show that there exists a digital $(0,m,2)$-net in base 2 with essentially lower asymptotic dispersion than the dyadic Hammersley point set. Our candidate is the net $\cP_m^{U}$ generated by the matrices
\begin{equation*} \label{matrixa1} C_1=J_m 
\text{\, and\,}
 C_2=U_m:=
\begin{pmatrix}
1 & 1 & 1 & \cdots & 1 & 1 & 1 \\
0 & 1 & 1 & \cdots & 1 & 1 & 1 \\
0 & 0 & 1 & \cdots & 1 & 1 & 1 \\
\vdots & \vdots & \vdots & \ddots & \vdots & \vdots & \vdots & \\
0 & 0 & 0 & \cdots &  1 & 1 & 1 \\
0 & 0 & 0 & \cdots &  0 & 1 & 1 \\
0 & 0 & 0 & \cdots &  0 & 0 & 1 \\
\end{pmatrix}.
\end{equation*}
The following result on the dispersion of $\cP_m^{U}$ demonstrates that the lower bound in Theorem~\ref{better} is sharp.
\begin{thm}
  For all $m\geq 4$ we have 
	  $$ \disp(\cP_m^{U})=\frac52\frac{1}{2^m}. $$
\end{thm}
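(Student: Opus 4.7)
The plan is to establish the equality by proving both inequalities separately. The lower bound is obtained by exhibiting an explicit empty box, and the upper bound by a case analysis on the number of grid points $k$ caught in an empty box's $x$-projection.

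For the lower bound, the first step is to observe that for any $n \in \{0, 1, \ldots, 7\}$ one has $e_i = 0$ for $i \geq 4$, so the values $\nu_m(3), \nu_m(4), \nu_m(5), \nu_m(6)$ depend only on the three lowest bits of $n$ and are independent of $m$ (for $m \geq 3$). A direct computation gives $\nu_m(3) = 1/4$, $\nu_m(4) = 7/8$, $\nu_m(5) = 3/8$, and $\nu_m(6) = 1/8$; sorting these together with $\{0, 1\}$ produces a single gap of length $1/2$ between $3/8$ and $7/8$. The half-open box whose $x$-interval contains precisely the four grid coordinates $3/2^m, 4/2^m, 5/2^m, 6/2^m$ (so that $\ell_x$ approaches $5/2^m$) and whose $y$-interval lies strictly inside $(3/8, 7/8)$ (so that $\ell_y$ approaches $1/2$) is therefore disjoint from $\cP_m^U$, giving $\disp(\cP_m^U) \geq 5/2^{m+1}$. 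For $m \geq 5$ the same bound also follows from Theorem~\ref{better}, but the explicit construction has the advantage of covering $m = 4$.

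For the upper bound, the strategy is to show that for every consecutive block $S \subseteq \{0, \ldots, 2^m - 1\}$ of size $k$, the maximum gap in $\nu_m(S) \cup \{0, 1\}$ is at most $5/(2(k+1))$. Combined with the fact that an empty box whose $x$-projection contains exactly $k$ grid points satisfies $\ell_x < (k+1)/2^m$, this yields $\lambda(B) < 5/2^{m+1}$. The key structural tool is that for an aligned block $S = \{a \cdot 2^j, \ldots, (a+1) \cdot 2^j - 1\}$, the image $\nu_m(S)$ is a translate of the equidistant grid $\{r/2^j : 0 \leq r < 2^j\}$: the high bits $e_{j+1}, \ldots, e_m$ are constant across $S$ (producing a common offset via $s_{j+1}, \ldots, s_m$), while the recursion $s_i = e_i \oplus s_{i+1}$ makes the map $(e_1, \ldots, e_j) \mapsto (s_1, \ldots, s_j)$ an affine bijection of $\{0, 1\}^j$. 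An arbitrary consecutive $S$ of size $k \leq 2^j$ lies in at most two adjacent aligned $2^j$-blocks, so $\nu_m(S)$ is contained in the union of two shifted regular grids of mesh $1/2^j$, and the needed gap bound can be read off with some care.

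The main obstacle lies in the small-$k$ regime, and especially the tight case $k = 4$. Several quadruples in addition to $\{3, 4, 5, 6\}$ (for instance $\{11, 12, 13, 14\}$ or $\{15, 16, 17, 18\}$) produce a gap of exactly $1/2$ in $\nu_m(S) \cup \{0, 1\}$, so there is no single canonical worst case; but in each such configuration the constraint $\ell_x < 5/2^m$ gives $\lambda(B) < 5/2^{m+1}$ directly. The crucial propagation step is that adding a fifth consecutive integer on either side always strictly shrinks the largest gap below the next threshold $5/12$, so that $k = 5$ likewise stays under the bound. Finite verifications then handle $k \in \{5, 6, 7, 8\}$, after which the grid-structure lemma gives the bound uniformly for $k \geq 9$. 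I expect the detailed case analysis to closely parallel the preceding proof of $\disp(\cH_{2,m})$, with $\nu_m$ playing the role there played by $\varphi_m$.
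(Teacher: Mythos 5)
Your lower bound is correct: the values $\nu_m(3),\nu_m(4),\nu_m(5),\nu_m(6)=\tfrac14,\tfrac78,\tfrac38,\tfrac18$ are right, the largest gap in $\{0,\tfrac18,\tfrac14,\tfrac38,\tfrac78,1\}$ is $\tfrac12$, and a box of width (approaching) $\tfrac{5}{2^m}$ capturing exactly these four abscissae gives $\disp(\cP_m^U)\geq\tfrac52\tfrac{1}{2^m}$ for all $m\geq4$; this is the same construction as the paper's, written in the NUT coordinates instead of the equivalent NLT ones. Your upper-bound strategy is also the right one in outline, and the target inequality --- every set $S$ of $k$ consecutive integers has largest gap at most $\tfrac{5}{2(k+1)}$ in $\nu_m(S)\cup\{0,1\}$ --- is true (it is implied by the bounds $\tfrac{7}{2^{j+2}}$, $\tfrac{3}{2^{j+1}}$, $\tfrac{5}{2^{j+2}}$ that the paper establishes for blocks of sizes $2^j$, $\lfloor\tfrac43 2^j\rfloor$, $\tfrac32 2^j$).

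The genuine gap is in how you propose to prove that inequality for $k\geq 9$. Knowing that $\nu_m(S)$ is \emph{contained} in the union of two translates of the grid $\{r/2^j: 0\leq r<2^j\}$ constrains where the image may lie, but an upper bound on a gap requires a \emph{lower} bound on occupancy: you must know which of the admissible grid positions are actually hit. Nothing in the containment statement prevents the $r$ points of $S$ in the left aligned block and the $k-r$ points in the right aligned block from both landing on, say, the lowest positions of their respective grids, leaving a gap of order $\tfrac12$ --- so the bound cannot simply ``be read off.'' What the grid structure does give unconditionally is one image point in every dyadic interval of length $2^{-j}$ whenever $k\geq 2^j$ (any $2^j$ consecutive integers have pairwise distinct low-order digit strings), hence a gap of at most $\tfrac{2}{2^j}$ and an area bound with constant $4$ --- i.e.\ it only reproves Theorem~\ref{NUT}, not the constant $\tfrac52$. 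Getting from $4$ down to $\tfrac52$ requires classifying exactly which pairs $x_1,x_2\in S$ can be consecutive in value with a near-maximal gap (this depends on the carry pattern of $x_2-x_1$ through the specific bidiagonal matrix, the patterns $\vecp_{11},\dots,\vecp_{22}$ in the paper) and then, for each such pair, exhibiting a gap-closing element of the slightly enlarged block (the elements $y^*$, $z^*$ in the paper) together with a bound showing it really lies in that block. That interleaving analysis is the entire substance of the proof, and ``can be read off with some care'' / ``I expect the case analysis to parallel the Hammersley proof'' leaves it undone; the finite checks for $k\leq 8$ are fine in principle but do not close the argument without it.
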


\begin{proof}
Given a digital $(0,m,2)$-net generated by the matrices $J_m$ and $U_m$. By the arguments in the proof of Theorem~\ref{NUT} we find that
$\cP_m^{U}$ has same dispersion as the NLT net $\tilde{\cP}_m^{U}$ generated by $J_m$ and the matrix
\begin{equation*} \label{matrixa2} J_mU_m^{-1}J_m=
\begin{pmatrix}
1 & 0 & 0 & \cdots & 0 & 0 & 0 \\
1 & 1 & 0 & \cdots & 0 & 0 & 0 \\
0 & 1 & 1 & \cdots & 0 & 0 & 0 \\
\vdots & \vdots & \vdots & \ddots & \vdots & \vdots & \vdots & \\
0 & 0 & 0 & \cdots &  1 & 0 & 0 \\
0 & 0 & 0 & \cdots &  1 & 1 & 0 \\
0 & 0 & 0 & \cdots &  0 & 1 & 1 \\
\end{pmatrix}.
\end{equation*}
Hence, we investigate the dispersion of the net generated by $J_m$ and $J_mU^{-1}J_m$, which can be written as
$$ \tilde{\cP}_m^{U}=\left\{\left(\frac{n}{2^m},\psi(n)\right):n=0,1,\dots,2^m-1\right\}, $$
	where for $n=[e_m\dots e_1]$ we set $\psi(n)=\frac{e_1}{2}+\frac{e_1\oplus e_2}{2^2}+\dots+\frac{e_{m-1}\oplus e_m}{2^{m-1}}. $
  Thus, we want to show $\disp(\tilde{\cP}_m^{U})=\frac52\frac{1}{2^m}$.
		For the lower bound, we can either refer to Theorem~\ref{better} or observe that the box
  $$ \left(\frac{3}{2^m},\frac{7}{2^m}\right) \times \left(\frac{5}{16},\frac{15}{16}\right)  $$
	is empty, since $\psi(4/2^m)=3/16$, $\psi(5/2^m)=15/16$ and $\psi(6/2^m)=5/16$. \\
	We prove the upper bound. Our approach will be as follows. For $k\in\{1,\dots,m-1\}$ let $A_k$, $B_k$ and $C_k$ be sets consisting of $2^k$, $\lfloor \frac43 2^k \rfloor$ and $\frac32 2^k$ consecutive elements of $\{1,\dots,2^m-1\}$, respectively. We will show that the largest gaps of consecutive elements in $\psi(A_k)\cup\{0,1\}$, $\psi(B_k)\cup\{0,1\}$ and $\psi(C_k)\cup\{0,1\}$ are bounded by $\frac{7}{2^{k+2}}$, $\frac{3}{2^{k+1}}$ and $\frac{5}{2^{k+2}}$, respectively. 
	This yields the result. \\
 We consider two elements $x_1,x_2\in A_k$ such that $\psi(x_1)$ and $\psi(x_2)$ are consecutive elements in $\psi(A_k)$. We write
		     \begin{align*}
     x_1=& [e_{m}\dots e_{k+1}e_k\dots\dots e_1]_2, \\
	   x_2=& [e'_{m}\dots e'_{k+1}e'_k\dots\dots e'_1]_2.
\end{align*}   
  Hence 
	  $$ \psi(x_1)-\psi(x_2)=\frac{1}{2^k}+\sum_{j=k+1}^{m}\frac{(e'_{j-1}\oplus e'_j)-(e_{j-1}\oplus e_j)}{2^j}=:\frac{1}{2^k}+E_k. $$
		Note that we have four possibilities how the strings $(e_m\dots e_k)$ and $(e'_m\dots e'_k)$ might look, namely
		\begin{align*} \vecp_{11}=&(e_m\dots e_{l+1}0|1\dots 1|0), \, \vecp_{12}=(e_m\dots e_{l+1}0|1\dots 1|1), \\  \vecp_{21}=&(e_m\dots e_{l+1}1|0\dots 0|0), \, \vecp_{22}=(e_m\dots e_{l+1}1|0\dots 0|1). \end{align*}
		Here we have $l\in\{k+1,\dots,m\}$ minimal such that $e_l=0$.
		Assume that $(e_m\dots e_k)=\vecp_i$ and $(e'_m\dots e'_k)=\vecp_j$ for some $i,j\in\{11,12,21,22\}$. 
		Then it is straightforward to check that we get for the difference $\Delta \psi:=\psi(x_2)-\psi(x_1)$ the value $\frac{3}{2^{k+1}}+\frac{1}{2^{l+1}}\leq \frac{7}{2^{k+2}}$ in the following cases: For $l\geq k+2$ this happens if $(i,j)=(21,11)$ and $e_{l+1}=1$ or if $(i,j)=(12,22)$ and $e_{l+1}=0$, whereas for $l=k+1$ this difference occurs if $(i,j)=(11,21)$ and $e_{l+1}=0$ or if $(i,j)=(22,12)$ and $e_{l+1}=1$. Otherwise $\Delta\psi$ is smaller or equal $\frac{3}{2{k+1}}$ and matches this bound only for $(i,j)=(11,12)$ or $(i,j)=(22,21)$ and $l= k+1$. In all other cases we have $\Delta\psi\leq \frac{1}{2^k}+\frac{1}{2^{l+1}}\leq\frac{5}{2^{k+2}}$. Allover, we find that the largest gap between two consecutive elements in $\psi(A_k)\cup\{0,1\}$ is bounded by $\frac{7}{2^{k+2}}$. \\
Next we consider a set $B_k=\{x_1,\dots,x_{\lfloor \frac43 2^k \rfloor}\}$ consisting of $\lfloor \frac43 2^k \rfloor$ consecutive elements of $\{1,\dots,2^m-1\}$. We show that in $\psi(B_k)\cup\{0,1\}$ larger gaps than $\frac{3}{2^{k+1}}$ do no longer occur. Consider the set $\bar{B}_k=\{x_1,\dots,x_{2^k}\}$ and let $y_1, y_2\in\bar{B}_k$ consecutive in $\psi(\bar{B}_k)$. As observed above, such gaps can only occur if either 
		 \begin{align*}
     y_1=& [e_{m}\dots e_{l+2}xx|\tilde{x}\dots \tilde{x}|\tilde{x}x\dots \tilde{x}x|x e_{i-2} \dots e_1]_2, \\
	   y_2=& [e_{m}\dots e_{l+2}x\tilde{x}|x\dots x|\tilde{x}\tilde{x}\dots \tilde{x}\tilde{x}|x e_{i-2} \dots e_1]_2,
\end{align*} 
or if
	\begin{align*}
     y_1=& [e_{m}\dots e_{k+3}xx|x\tilde{x}\dots x\tilde{x}|\tilde{x} e_{i-2} \dots e_1]_2, \\
	   y_2=& [e_{m}\dots e_{k+3}x\tilde{x}|xx\dots xx|\tilde{x} e_{i-2} \dots e_1]_2,
\end{align*} 
			where $x\in\{0,1\}$ and $\tilde{x}=1-x$ and where $i\in\{1,\dots,k\}$ is maximal such that $e_i\oplus e_{i-1}=0$. With $y_1$ and $y_2$ given like in the first of these two situations, we find $$|y_2-y_1|=\frac16 (2^{k+3}-2^i)>2^k;$$ i.e. not both numbers can be in $\tilde{B}_k$. So this case does not occur. However, the second case is valid since $|y_2-y_1|<2^k$. We have $\Delta\psi=\frac{7}{2^{k+2}}$. However, we can close this gap with a particular element $y^*$ in $B_k$. If $x=0$ we choose $y^*=[e_{m}\dots e_{k+3}01|01\dots 01|1 e_{i-2} \dots e_1]_2$, for which we have $\psi(y_1)<\psi(y^*)<\psi(y_2)$, $\psi(y_2)-\psi(y^*)=\frac{1}{2^k}$ and $\psi(y^*)-\psi(y_1)=\frac{3}{2^{k+2}}$.
			Further, we have $y_1<y_2<y^*$ and
			$$ y^*-y_2=\sum_{j=0}^{(k-i-1)/2}2^{i+2j-1}=\frac16 (2^{k+1}-2^i)\leq \left\lfloor \frac13 2^k\right\rfloor, $$
			which shows that indeed $y^*\in B_k$. If $x=1$, then $y^*=[e_{m}\dots e_{k+3}11|11\dots 11|0 e_{i-2} \dots e_1]_2$ does the trick analogously.\\
	Next we consider a set $C_k=\{x_1,\dots,x_{3\cdot 2^{k-1}}\}$ consisting $3\cdot 2^{k-1} $ consecutive elements of $\{1,\dots,2^m-1\}$. We show that in $\psi(C_k)\cup\{0,1\}$ larger gaps than $\frac{5}{2^{k+2}}$ do no longer occur. Consider the set $\bar{C}_k=\{x_1,\dots,x_{2^k}\}$ and let $z_1, z_2\in\bar{C}_k $. As observed above, gaps larger than $\frac{5}{2^{k+2}}$, namely of length $\frac{3}{2^{k+1}}$, can only occur if 
	\begin{align*}
     z_1=& [e_{m}\dots e_{k+2}x|x\tilde{x}\dots x\tilde{x}x|x e_{i-2} \dots e_1]_2, \\
	   z_2=& [e_{m}\dots e_{k+2}x|\tilde{x}\tilde{x}\dots \tilde{x}\tilde{x}\tilde{x}|x e_{i-2} \dots e_1]_2,
\end{align*} 
		where $l=k+1$, $x\in\{0,1\}$ and $\tilde{x}=1-x$. If $x=0$ choose 
		$$z^*= [e_{m}\dots e_{k+2}1|01\dots 010|0 e_{i-2} \dots e_1]$$ 
		to close the gap. If $x=1$, then we know that $k\leq m-2$ and that there must be an $i\in\{k+2,\dots,m\}$ such that $e_i=0$, because otherwise $z_1$ could not be in $\bar{C}_k$. Let $p$ be the minimal index in $i\in\{k+2,\dots,m\}$ with $e_i=0$. Then the number 
		$$z^*=[e_{m}\dots e_{p+1}10\dots 0|00\dots 00|1 e_{i-2} \dots e_1]$$ does the trick. In both cases we have $z^*-\max\{z_1,z_2\}\leq \frac16(2^{k+1}+2^i)\leq 2^{k-1}$, and therefore $z^*\in C_k$. 
		 Hence, the longest gap between consecutive numbers in $\psi(C_k)\cup \{0,1\}$ is bounded by $\frac{5}{2^{k+2}}$ and the proof is complete.
\end{proof}

One might wonder whether the dyadic Hammersley point set is the digital $(0,m,2)$-net in base 2 with highest (asymptotical) dispersion. This is not the case, since the net $\cP_m^{L}$ generated by the matrices
\begin{equation*} \label{matrixc} C_1=J_m
\text{\, and\,}
 C_2=L_m=
\begin{pmatrix}
1 & 0 & 0 & \cdots & 0 & 0 & 0 \\
1 & 1 & 0 & \cdots &  0 & 0 & 0 \\
1 & 1 & 1 & \cdots & 0 & 0 & 0 \\
\vdots & \vdots & \vdots & \ddots & \vdots & \vdots & \vdots & \\
1 & 1 & 1 & \cdots &  1 & 0 & 0 \\
1 & 1 & 1 & \cdots &  1 & 1 & 0 \\
1 & 1 & 1 & \cdots &  1 & 1 & 1 \\
\end{pmatrix}.
\end{equation*}
has essentially larger dispersion. The following theorem further demonstrates that the constant $4$ in the upper bound on the dispersion of dyadic NLT and NUT nets given in Theorem~\ref{NUT}  is best possible.

\begin{thm} \label{badd}
  For all $m\geq 4$ we have 
	  $$ \disp(\cP_m^{L})\geq \max_{k=2,\dots,m-2}\left(\frac{2^{k+1}-1}{2^m}\left(\frac{1}{2^{k-1}}-\frac{1}{2^{m}}\right)\right)
		   =\begin{cases}
			      \frac{1}{2^m}\left(4-\frac{4}{2^{\frac{m}{2}}}+\frac{1}{2^m}\right) & \text{if $m$ is even,} \\
						\frac{1}{2^m}\left(4-\frac{3}{2^{\frac{m-1}{2}}}+\frac{1}{2^m}\right) & \text{if $m$ is odd.}
			  \end{cases} $$
\end{thm}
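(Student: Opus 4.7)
The plan is to produce, for each $k \in \{2, \dots, m-2\}$, an explicit empty open rectangle whose area equals the $k$-th term inside the maximum, and to conclude by the supremum definition of dispersion. The candidate box is
$$B_k := \left(\frac{3}{2^m},\ \frac{2^{k+1}+2}{2^m}\right) \times \left(\frac{1}{2} - \frac{1}{2^k},\ \frac{1}{2} + \frac{1}{2^k} - \frac{1}{2^m}\right),$$
whose Lebesgue measure is $\frac{2^{k+1}-1}{2^m}\left(\frac{1}{2^{k-1}} - \frac{1}{2^m}\right)$. Since the $x$-coordinates of $\cP_m^L$ are $\{n/2^m : 0 \leq n < 2^m\}$, the indices $n$ with $n/2^m$ in the horizontal projection of $B_k$ are precisely $n \in \{4, 5, \dots, 2^{k+1}+1\}$, so what must be shown is that $\mu_m(n) \notin (1/2 - 1/2^k, 1/2 + 1/2^k - 1/2^m)$ for each such $n$.

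To prove this I would write each index uniquely as $n = a + 2^k b$ with $a \in \{0, \dots, 2^k - 1\}$, which splits $\mu_m(n) = \alpha(a) + \beta(a,b)$, where $\alpha(a) := \sum_{i=1}^k g_i/2^i$ depends only on the low bits and $\beta(a,b) \in [0, 1/2^k - 1/2^m]$ equals either $T_0(b) := \sum_{i=k+1}^m E_i/2^i$ or $T_1(b) := 1/2^k - 1/2^m - T_0(b)$ according to whether the parity $g_k(a)$ is $0$ or $1$. As $a$ runs over $\{0,\dots,2^k-1\}$, $\alpha(a)$ takes all values in $\{j/2^k : 0 \le j < 2^k\}$. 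For indices with $j \leq 2^{k-1} - 2$ one obtains $\mu_m(n) \leq \alpha + (1/2^k - 1/2^m) \leq 1/2 - 1/2^k - 1/2^m$, and for $j \geq 2^{k-1} + 1$ one obtains $\mu_m(n) \geq \alpha \geq 1/2 + 1/2^k$; in both cases $\mu_m(n)$ lies outside the target $y$-interval. Hence only $j \in \{2^{k-1} - 1, 2^{k-1}\}$, corresponding to $a \in \{2, 3\}$, can possibly contribute.

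Inside $\{4,\dots,2^{k+1}+1\}$ the only indices with $n \bmod 2^k \in \{2, 3\}$ are $n = 2^k + 2$ and $n = 2^k + 3$ (forcing $b = 1$). A direct computation using the decomposition above gives $\mu_m(2^k + 2) = 1/2 - 1/2^k$ and $\mu_m(2^k + 3) = 1/2 + 1/2^k - 1/2^m$; these sit exactly on the two endpoints of the open $y$-interval and are therefore not inside it. This establishes $B_k \cap \cP_m^L = \emptyset$. Approximating $B_k$ from inside by half-open boxes $[3/2^m + \epsilon,\, (2^{k+1}+2)/2^m) \times [1/2 - 1/2^k + \epsilon,\, 1/2 + 1/2^k - 1/2^m)$ and letting $\epsilon \to 0$ yields $\disp(\cP_m^L) \geq \lambda(B_k)$. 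Maximising over $k \in \{2,\dots,m-2\}$ gives the first inequality, and the closed-form expressions drop out of the identity $\lambda(B_k) \cdot 2^{2m} = 2^{m+2} - (2^{k+1} + 2^{m-k+1}) + 1$ together with convexity of $t \mapsto 2^t$ (the minimum of $2^{k+1} + 2^{m-k+1}$ is at $k = m/2$ for even $m$ and at $k \in \{(m-1)/2, (m+1)/2\}$ for odd $m$).

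The main obstacle I expect is the sharpness of the boundary: the two ``bad'' indices $n = 2^k + 2$ and $n = 2^k + 3$ land precisely on the endpoints of the forbidden $y$-interval, so the whole argument is sensitive to whether that interval is open or closed, and the $-1/2^m$ correction in its upper endpoint is exactly what keeps $\mu_m(2^k + 3)$ from being in the open interior. Setting up the bijection $a \leftrightarrow (g_1, \dots, g_k) \leftrightarrow \alpha$ (an inverse Gray-code type map) together with the parity condition distinguishing $\beta = T_0$ from $\beta = T_1$ is the combinatorial heart of the argument; by comparison the convexity optimisation at the end is routine.
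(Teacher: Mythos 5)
Your proof is correct, and it starts from exactly the same empty box $J_k=\left(\frac{3}{2^m},\frac{2^{k+1}+2}{2^m}\right)\times\left(\frac12-\frac{1}{2^k},\frac12+\frac{1}{2^k}-\frac{1}{2^m}\right)$ and the same reduction as the paper (show that the $y$-coordinate $\omega(n)$ avoids the open interval $\left(\frac12-\frac{1}{2^k},\frac12+\frac{1}{2^k}-\frac{1}{2^m}\right)$ for all $n\in\{4,\dots,2^{k+1}+1\}$, then optimise over $k$); the closing convexity computation is also identical. Where you genuinely differ is in the verification of emptiness. The paper splits into even and odd $n$, treats $n=2^{k+1}$ and $n=2^{k+1}+1$ separately, and runs a digit-by-digit case analysis on the parity $e_2\oplus\dots\oplus e_{k+1}$ and on the positions of the first nonzero digits. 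You instead use that the cumulative-XOR map $(e_1,\dots,e_k)\mapsto(g_1,\dots,g_k)$ is a bijection, so the head $\alpha(a)$ runs exactly over $\{j/2^k: 0\le j<2^k\}$ while the tail lies in $[0,2^{-k}-2^{-m}]$; this confines $\omega(n)$ to $[j/2^k,\,(j+1)/2^k-2^{-m}]$ and eliminates every residue $a\bmod 2^k$ except $a\in\{2,3\}$ in one stroke, leaving only $n=2^k+2$ and $n=2^k+3$ to evaluate, and these land precisely on the two endpoints of the open interval. Your route is shorter and more structural (it also makes transparent why the box cannot be widened in the $y$-direction), at the price of invoking the Gray-code-type bijection as a lemma; the paper's argument is more pedestrian but works entirely at the level of individual digits. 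The details I checked ($a=2\leftrightarrow j=2^{k-1}-1$, $a=3\leftrightarrow j=2^{k-1}$, the values $\omega(2^k+2)=\frac12-2^{-k}$ and $\omega(2^k+3)=\frac12+2^{-k}-2^{-m}$, and the minimisation of $2^{k+1}+2^{m-k+1}$) are all correct.
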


\begin{proof}
We note that $\cP_m^{L}$ can be written in the form
$$ \cP_m^{L}=\left\{\left(\frac{n}{2^m},\omega(n)\right):n=0,1,\dots,2^m-1\right\}, $$
where for $n=[e_m\dots e_1]_2$ we set $\omega(n)=\frac{e_1}{2}+\frac{e_1\oplus e_2}{2^2}+\dots+\frac{e_1\oplus \dots \oplus e_m}{2^{m}}. $
  We show that for every $k\in\{2,\dots,m-2\}$ the box
	  $$ J_k:=\left(\frac{3}{2^m},\frac{2^{k+1}+2}{2^m}\right)\times \left(\frac12-\frac{1}{2^k},\frac12+\frac{1}{2^k}-\frac{1}{2^m}\right) $$
		contains no element of $ \cP_m^{L}$, which proves the lower bound on the dispersion. 
		
		A point $(x,y)\in \cP_m^L$ can be in $J_k$ only if
		$2^m x \in \{4,\dots, 2^{k+1}+1\}$. We have to show that the corresponding $y$-coordinates are not in $\left(\frac12-\frac{1}{2^k},\frac12+\frac{1}{2^k}-\frac{1}{2^m}\right)$. We show: if $n\in \{4,\dots, 2^{k+1}+1\}$ is even, then $\omega(n)\leq \frac12-\frac{1}{2^k}$ and if $n$ is odd, then $\omega(n)\geq \frac12+\frac{1}{2^k}-\frac{1}{2^m}$. \\
		Clearly, $\omega(2^{k+1})=\sum_{j=k+2}^{m}2^{-j}<\frac18$ and $\omega(2^{k+1}+1)=\sum_{j=1}^{k+1}2^{-j}\geq \frac78$. Let $n\in \{4,\dots, 2^{k+1}-2\}$ be even; i.e. $n=2^ke_{k+1}+\dots+2e_2$ for some digits $e_i\in\{0,1\}$ for $i\in\{2,\dots,k+1\}$. Hence
		 $$ \omega(n)=\frac{e_2}{4}+\dots+\frac{e_2\oplus \dots \oplus e_k}{2^k}+\sum_{j=k+1}^{m}\frac{e_2\oplus \dots \oplus e_{k+1}}{2^j}. $$
		If $e_2\oplus \dots \oplus e_{k+1}=0$, then $\omega(n)\leq \sum_{j=2}^{k}2^{-j}=\frac12-2^{-k}$. Assume now $e_2\oplus \dots \oplus e_{k+1}=1$, which implies
		that the number of $i\in\{2,\dots,k+1\}$ such that $e_i=1$ is odd. If we assume additionally that $e_2=0$, then $\omega(n)\leq \frac14-2^{-m}\leq \frac12-2^{-k}$, since $k\geq 2$. Otherwise $e_2=1$. If $e_2$ were the only non-zero digit of $n$, then $n=2$, a contradiction. Hence there must be at least two more non-zero digits $e_i$ of $n$ with $i\in\{3,\dots,k+1\}$. Since the latter index set has at least two elements only if $k\geq 3$, we see that $e_2=1$ is impossible for $k=2$. If $k\geq 3$ and $e_2=1$, let $i_1<i_2$ minimal in $\{3,\dots,k+1\}$ such that the corresponding digits $e_{i_1}$ and $e_{i_2}$ are non-zero. Then $e_2\oplus \dots\oplus e_{i_1}=0$ and $3\leq i_1\leq k$. That yields $\omega(n)\leq \sum_{j=2}^{m}2^{-j}-2^{-i_1}\leq \frac12-2^{-k}$ and we are done with the case $n$ is even. \\
		Let $n\in \{5,\dots, 2^{k+1}-1\}$ be odd; i.e. $n=2^ke_{k+1}+\dots+2e_2+1$ for some digits $e_i\in\{0,1\}$ for $i\in\{2,\dots,k+1\}$. Hence
		 $$ \omega(n)=\frac{1}{2}+\frac{1\oplus e_2}{4}+\dots+\frac{1\oplus e_2\oplus \dots \oplus e_k}{2^k}+\sum_{j=k+1}^{m}\frac{1\oplus e_2\oplus \dots \oplus e_{k+1}}{2^j}. $$
		If $e_2\oplus \dots \oplus e_{k+1}=0$, then $\omega(n)\geq \frac12+\sum_{j=k+1}^{m}2^{-j}=\frac12+2^{-k}-2^{-m}$. Assume now that $e_2\oplus \dots \oplus e_{k+1}=1$. If additionally $e_2=0$, then $\omega(n)\geq \frac34\geq \frac12+2^{-k}$. Otherwise, $e_2$ cannot be the only non-zero digit of $n$, since then $n=3$, and we have $k\geq 3$. Hence we have indices $i_1<i_2$ minimal in $\{3,\dots,k+1\}$ such that the corresponding digits $e_{i_1}$ and $e_{i_2}$ are non-zero. Thus, $1\oplus e_2\oplus\dots\oplus e_{i_1}=1$ and $i_1\leq k$ and therefore $\omega(n)\geq \frac12+2^{-i_1}\geq \frac12+2^{-k}$. The proof is complete.	
\end{proof}

\section{Dispersion and discrepancy}

It is an interesting problem to investigate connections between discrepancy and dispersion of point sets. For instance, is it true that point sets in the unit square which achieve the optimal order of star discrepancy (or some other notion of discrepancy) also achieve the optimal order of dispersion? Does the reverse implication hold? Fibonacci lattices and $(0,m,2)$-nets indicate relations in this direction. \\
Let us compare our dispersion results with what we know about discrepancy of digital nets. We found that the point set $\cP_{m}^U$ has lower dispersion than the Hammersley point set. The same is true for the star and the $L_2$ discrepancy, respectively. Let us recall the definition of discrepancy for $N$-element point sets in the unit square. We define
the discrepancy function of such a point set $\cP$ by
$$ \Delta(\bst,\cP)=\sum_{\bsz\in\cP}\bsone_{[\bszero,\bst)}(\bsz)-Nt_1t_2, $$
where for $\bst=(t_1,t_2)\in [0,1]^2$ we set $[\bszero,\bst)=[0,t_1)\times [0,t_2)$ with volume $t_1t_2$
and denote by $\bsone_{[\bszero,\bst)}$ the indicator function of this interval. The star discrepancy of $\cP$ is given by
$$ D^*(\cP):=\sup_{\bst \in [0,1]^2}|\Delta(\bst,\cP)| $$
and the $L_2$ discrepancy  of $\cP$ is defined as
$$ L_{2}(\cP):=\left(\int_{[0,1]^2}|\Delta(\bst,\cP)|^2\rd \bst\right)^{\frac{1}{2}}. $$
While it is known that $\lim_{m\to\infty} \frac{D^*(\cH_{m,2})}{m}=\frac13$, we have $\limsup_{m\to\infty} \frac{D^*(\cP_{m}^U)}{m}\leq 0.2263...$ (see~\cite[Theorems 4 and 6]{Larch}). The difference between the corresponding $L_2$ discrepancies is even more significant, as they are not even of same order in $N=2^m$. We have $\lim_{m\to\infty} \frac{L_2(\cH_{m,2})^2}{m^2}=\frac{1}{64}$ and $\lim_{m\to\infty} \frac{L_2(\cP_{m}^U)^2}{m}= \frac{5}{192}$ (see~\cite{Kritz}). Note that these results demonstrate that both the Hammersley point set and the net $\cP_m^U$ achieve the optimal order of star discrepancy $\log{N}$ (this is true for all $(0,m,2)$-nets), but only $\cP_m^U$ achieves the optimal order of $L_2$ discrepancy $\sqrt{\log{N}}$.\\
However, while the Hammersley point set is the worst digital $(0,m,2)$-net with respect to star discrepancy, this is not the case for the dispersion, so there does not seem to be a direct connection between these quantities. While dispersion is a measure of irregularities of distribution with respect to arbitrary axes-parallel boxes, the star and $L_2$ discrepancy are defined with respect to anchored boxes. Therefore it seems more convenient to compare dispersion to extreme discrepancy, which is defined as
$$ D(\cP):=\sup_{B\in \mathcal{B}} \left|\sum_{\bsz\in\cP}\bsone_{B}(\bsz)-N\lambda(B)\right|. $$ 
 The obvious bound $D(\cP)\geq N\disp(\cP)$ does not yield any non-trivial results in our context.
Unfortunately, not many precise results are known for the extreme discrepancy. In particular, it is unknown whether the Hammersley point set has higher extreme discrepancy than the nets $\cP_m^U$ or $\cP_m^L$. For this reason, it is currently not possible to compare the respective dispersions and extreme discrepancies.

	\subsection*{Acknowledgments} The author is supported by the Austrian Science Fund (FWF),
Project F5509-N26, which is a part of the Special Research Program ``Quasi-Monte Carlo Methods: 
Theory and Applications''.

\bibliographystyle{plain}
\bibliography{dispnets}

\end{document}